\documentclass[11pt]{amsart}

\usepackage{eucal}
\usepackage{amssymb,url,amscd,amsfonts}
\usepackage{latexsym}
\usepackage{hyperref}
\usepackage[all]{xy}
\theoremstyle{plain}
\newtheorem{theorem}{Theorem}

\newtheorem{corollary}[theorem]{Corollary}
\newtheorem{lemma}[theorem]{Lemma}

\numberwithin{equation}{section} 
\numberwithin{theorem}{section}

\begin{document}

\title[Radial graphs of constant curvature]
{Radial graphs of constant curvature and prescribed boundary}

\thanks{Work partially supported by CAPES-Brazil}

\author[F. F. Cruz]{Fl\'avio F. Cruz}
\address{Departamento de Matem\'atica\\ Universidade Regional do Cariri\\ Campus Crajubar \\ 
Juazeiro do Norte, Cear\'a\\ Brazil\\ 63041-141}
\email{flavio.franca@urca.br}

\subjclass[2000]{53C42, 35J60}

\begin{abstract}
In this paper we are concerned with the problem of finding hypersurfaces 
of constant curvature and prescribed boundary in the Euclidean space, 
without assuming the convexity of the prescribed solution and
using the theory of fully nonlinear elliptic equations. 
If the given data admits a suitable radial graph as a subsolution, 
then we prove that there exists a radial graph with constant curvature and realizing the prescribed boundary. 
As an application, it is proved that if $\Omega\subset\mathbb{S}^n$ is a mean convex domain  
whose closure is contained 
in an open hemisphere of $\mathbb{S}^n$ then, for $0<R<n(n-1),$  there exists
a radial graph of constant scalar curvature $R$ and boundary $\partial\Omega.$

\end{abstract}

\maketitle

%%%%%%%%%%%%%%%%%%%%%%%%%%%%%%%%%%%%%%%%%%%%%%%%%%%%%%%%%%%%%%%%%%%%%%%%%

\section{Introduction}
\label{section1}

The aim of this work is to study the following Plateau type problem:
given a smooth symmetric function $f$ of $n$ ($n\geq 2$) variables and a 
$(n-1)$-dimensional compact embedded submanifold $\Lambda$ of 
$\mathbb{R}^{n+1},$ decide whether there exists a hypersurface $\Sigma$
of constant curvature
\begin{equation}
\label{intro1}
f(\kappa[\Sigma])=c
\end{equation}
with boundary 
\begin{equation}
\label{intro2}
\partial \Sigma = \Lambda,
\end{equation}
where $\kappa[\Sigma]=(\kappa_1, \ldots, \kappa_n)$ denotes the principal curvatures
of $\Sigma$ and $c$ is a constant. The classical Plateau problem for minimal or 
constant mean curvature surfaces, as well as the corresponding problem for Gauss or scalar curvature,
are important particular cases of the problem.

Although the solvability of the problem in this generality still remains open, there are
various existence results for some important particular cases. These results have shown that
the theory of nonlinear elliptic PDEs is a powerful tool in order to understand the solvability of the problem.  
In order to apply the PDE techniques a successful strategy is describe the hypersurface $\Sigma$ 
as the graph of a solution of the Dirichlet problem associated to a certain PDE. 
After the works of Bernstein, Leray, Jenkins, Finn and others, Serrin applied this approach in \cite{SERRIN} and
proved the existence of hypersurfaces of constant mean curvature 
and prescribed boundary $\Lambda$ in two geometric settings: Firstly, 
when the boundary $\Lambda$ is a (vertical) graph over the boundary of a domain in a hyperplane and, 
secondly, when $\Lambda$ is a radial graph over the boundary of a domain in a hypersphere.
For more general curvature functions, the first breakthroughs about the solvability 
of the problem were due to Caffarelli, Nirenber and Spruck \cite{CNSV}. Applying the techniques developed in \cite{CNSI}
and \cite{CNSIII}, they proved the existence of solutions to \eqref{intro1}-\eqref{intro2}
for a large class of curvature functions, which includes the scalar and Gaussian curvature.
Importantly, however, they only treat the cases where the boundary date is constant,
$\Lambda$ is the boundary of a strictly convex domain in a hyperplane and the solution is a graph over this domain.
,For the specific case of high order mean curvature functions, Ivochkina \cite{IVO2} was able
to extend the existence for general boundary values and nonconvex domains. 
Much subsequent work aimed to improve and extend they results, as we can see in \cite{GUAN-LI},
\cite{IVO1}, \cite{JORGE-FLAVIO}, \cite{SHENG-URBAS-WANG}
and \cite{TRU1}. Later, Guan and Spruck \cite{GUAN-SPRUCK-1} established
existence results for constant Gaussian curvature hypersurfaces which are radial graphs over
a domain in a hypersphere and whose boundary is a radial graph over the boundary of the domain. 
Their results were extended in \cite{GUAN-SPRUCK-2} and \cite{TRU-WANG} to
convex curvature functions. However, the existing results to date leave open the 
case of radial graphs with constant non-convex curvature
functions. In particular, there is no result for the fundamental case of the scalar curvature in this context.
The main purpose of this paper is to establish the existence of radial graphs with constant higher order curvature
$f=H_r$, when the prescribed hypersurface is not assumed convex.
 In particular, our results embrace the scalar curvature case.

Let us now explain more precisely the framework we are considering. 
Let $\Omega$ be a smooth domain in $\mathbb{S}^n\subset \mathbb{R}^{n+1}$ with boundary 
$\partial \Omega.$
In order to solve the problem \eqref{intro1}-\eqref{intro2} we seek for a smooth hypersurface 
$\Sigma$ that can be represented as a radial graph
\begin{equation}
X(x)=\rho(x) x, \quad \rho>0,\,  x\in \bar{\Omega} 
\end{equation}
with prescribed curvature and boundary 
\begin{align}
\label{equation}
\begin{split}
f(\kappa_{\Sigma}[X]) &=\psi(x), \quad x\in\Omega\\
X(x) & =\phi(x)x, \quad x\in\partial\Omega
\end{split}
\end{align}
where $\kappa_{\Sigma}[X]=(\kappa_1, \ldots, \kappa_n)$ denotes the principal curvatures of $\Sigma$
at $X(x)$ with respect to the inward unit normal, $\psi\in C^{\infty}(\overline\Omega), \phi\in C^{\infty}
(\partial\Omega),$  $\psi,\phi>0$ and $f$ is a high order curvature function
\begin{equation}
\label{Hr}
f(\kappa)=H_r(\kappa)=\frac{S_r(\kappa)}{S_r(1,\ldots, 1)}
\end{equation}
where $1< r\leq n$ and $S_r$ is the $r-$th order elementary symmetric function,
\begin{equation}
\label{Sr}
S_r(\kappa)=\sum \kappa_{i_1}\kappa_{i_2}\ldots \kappa_{i_r}
\end{equation}
the sum being taken over all increasing $k-$tuples $i_1, i_2, \ldots, i_k\subset\{ 1, \ldots, n\}.$

In this context, a function $\rho\in C^2(\overline\Omega)$ is 
called {\it admissible} if $\kappa_\Sigma[X]\in \Gamma_r$ at each point $X$ of its radial graph $\Sigma,$
where $\Gamma_r$ is the open convex cone in $\mathbb{R}^n$ with vertex at the origin and given by
\begin{equation}
\label{gamma-r}
\Gamma_r=\{\kappa\in \mathbb{R}^n\,:\, S_j(\kappa)>0, \, j=1, \ldots, r\}.
\end{equation}

We shall assume the existence of a suitable admissible subsolution:
there exists a smooth admissible radial graph 
$\bar\Sigma$: $\bar{X}(x)=\bar{\rho}(x)x$ 
over $\bar{\Omega}$ that is locally strictly convex (up to the boundary) in 
a neighbourhood of $\partial\Omega$ and satisfies
\begin{align}
\label{subsolution}
\begin{split}
f(\kappa_{\bar{\Sigma}}[\bar{X}])>&\psi(x)\quad\textrm{in } \Omega,\\
\bar{\rho}=&\phi\quad \textrm{on } \partial\Omega.
\end{split}
\end{align}

Our main result may be state as follows:

\begin{theorem}
\label{teorema1}
Let $\Omega$ be a smooth domain whose closure is contained in an open hemisphere
of $ \mathbb{S}^n.$ Suppose the mean curvature of 
$\partial\Omega$ as a submanifold of $\Omega,$ computed with respect to the 
unit normal pointing to the interior of $\Omega,$ is nonnegative. 
Then, under condition \eqref{subsolution}, there exists a smooth 
radial graph $\Sigma$ satisfying \eqref{equation}. 
\end{theorem}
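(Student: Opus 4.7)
The plan is to recast the geometric problem as a Dirichlet problem for a fully nonlinear second-order elliptic equation on $\overline{\Omega}\subset\mathbb{S}^n$ and to solve it by the continuity method combined with a priori $C^{2,\alpha}$ estimates. Writing $\rho=e^{u}$, the principal curvatures of the radial graph of $\rho$ can be expressed in terms of $u$, $Du$ and $D^2 u$ (with $D$ denoting covariant differentiation on $\mathbb{S}^n$), so that \eqref{equation} takes the form $F(D^2 u, Du, u, x)=\psi(x)$ in $\Omega$ with $u=\log\phi$ on $\partial\Omega$. On admissible functions this operator is elliptic and concave along the cone $\Gamma_r$, placing us in the framework where Caffarelli--Nirenberg--Spruck type techniques apply.

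To run the continuity method, I would join $\bar u=\log\bar\rho$ to the desired equation through a one-parameter family with the same boundary data, chosen so that $\bar u$ solves the problem at the initial parameter and one recovers \eqref{equation} at the final parameter, all the while keeping $\bar u$ a strict subsolution. Openness along the path follows from the implicit function theorem once the linearization at an admissible solution is shown to be invertible (using concavity of $F$ and the sign of the coefficient of $u$). The entire analytic content of the theorem is then condensed into establishing a priori $C^{2,\alpha}$ estimates for admissible solutions, uniform along the family.

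The zeroth and first order estimates are relatively standard: $u\geq\bar u$ by the comparison principle, with the upper bound coming from a spherical barrier permitted by the hemisphere hypothesis; the boundary gradient bound follows from the same pair of barriers, and the interior gradient and second derivative bounds from maximum-principle arguments exploiting the concavity of $F$ on $\Gamma_r$ together with the $C^0$ control. The main obstacle is the boundary $C^2$ estimate. Tangential and mixed tangential-normal second derivatives at $\partial\Omega$ are handled by constructing barriers of the form $\bar u + A d - B d^2$ in a tubular neighborhood of $\partial\Omega$, where $d$ is the distance function; the non-negativity of the mean curvature of $\partial\Omega$ is precisely what provides the correct sign of the boundary terms when the linearized operator is applied to $d$. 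The pure normal component $u_{\nu\nu}$ is the most delicate step and requires a Caffarelli--Nirenberg--Spruck type argument bounding it both from above and, crucially, from below in a way that keeps $\kappa_\Sigma[X]$ a definite distance inside $\Gamma_r$; here the assumption that $\bar\Sigma$ is locally strictly convex up to $\partial\Omega$ is essential, as it prevents the admissible cone condition from degenerating on $\partial\Omega$.

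Once these boundary second-derivative bounds are in hand, Evans--Krylov yields a uniform $C^{2,\alpha}$ estimate, Schauder theory then gives smoothness, and the continuity method closes to produce the desired smooth admissible solution $u$, hence the radial graph $\Sigma$ of $\rho=e^u$ satisfying \eqref{equation}.
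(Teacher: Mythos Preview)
Your outline captures the broad PDE strategy, but there is a genuine gap at the existence step that the paper has to work around and you have glossed over.

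\textbf{The continuity method does not close directly.} You claim openness follows from the implicit function theorem ``using concavity of $F$ and the sign of the coefficient of $u$.'' With $\rho=e^{u}$ the curvature operator is \emph{decreasing} in $D^2u$ (since $h_{ij}$ involves $-u_{ij}$), and the zeroth-order derivative is $-\sum f_i\kappa_i=-f<0$; in the paper's variable $v=-\ln\rho$ one has $H$ increasing in $D^2v$ but $H_v=\sum f_i\kappa_i>0$. Either way the zeroth-order coefficient has the \emph{wrong} sign relative to the ellipticity direction, so neither invertibility of the linearized Dirichlet problem nor a comparison principle giving $u\ge \bar u$ (equivalently $\rho\le\bar\rho$) is available for the original equation. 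This is not a technicality: solutions of \eqref{equation} are in general non-unique, and the a~priori estimates of Section~\ref{section3} are proved only for admissible solutions satisfying $u\ge\underline u$ (Theorems~\ref{teorema2} and~\ref{teorema3}). Without that ordering, you cannot feed the estimates back into closedness. The paper handles this by a two-stage argument: first a continuity method on the deformed family $H[v]=\Psi^t=e^{3(v-\underline v)}\bigl(t\psi+(1-t)\underline\psi\bigr)$, where the extra factor forces $H_v-\Psi^t_v\le -2\Psi^t<0$ so that comparison and invertibility hold; then a Li--type degree argument on a second family $\Xi^s$ to pass from the auxiliary endpoint to the genuine equation $f(\kappa)=\psi$, for which the linearization sign is uncontrolled. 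Your single-path continuity scheme would need to reproduce this mechanism.

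\textbf{Misplaced role of the boundary hypotheses.} You attribute the mean-convexity of $\partial\Omega$ to the mixed boundary $C^2$ barrier. In the paper it is used only to produce a minimal radial graph over $\Omega$ (via Serrin), which serves as the supersolution giving the $C^0$ upper bound and the boundary gradient estimate. Theorem~\ref{teorema1-2} is stated and proved with no hypothesis on the geometry of $\partial\Omega$: the barriers for $\nabla_{kn}u$ are built from the strict convexity of the subsolution near $\partial\Omega$ (Lemma~\ref{DEF-V}) together with a delicate Ivochkina-type inequality (Lemma~\ref{DEF-W}) that controls $G^{ij}\nabla_{ij}\Phi$ via a case analysis on the principal directions, and the pure normal bound uses the Caffarelli--Nirenberg--Spruck/Trudinger trick of minimizing the distance of $\kappa'$ to $\partial\Gamma'_{r-1}$. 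Your sketch does not reflect these ingredients, which are where the real analytic work lies.
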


In general, solutions to equation \eqref{equation} are not unique.
See for instance, Example 8.5.2 in \cite{LOPEZ}. 
It follows from the Gauss equation that the case of scalar curvature $R$ of $\Sigma$ is given by
$R=n(n-1) H_2$, therefore the scalar curvature case is
included in Theorem \ref{teorema1}.  Moreover, using the function
$\bar\rho=1$ as a subsolution, we obtain the following result:

\begin{corollary}
\label{corolario}
Let $\Omega\subset \mathbb{S}^n$ be as in Theorem \ref{teorema1}. 
Then, for $0<R<n(n-1),$ there exists
a radial graph $\Sigma$ of constant scalar curvature $R$ and boundary $\partial\Sigma=\partial\Omega.$
\end{corollary}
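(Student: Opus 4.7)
The plan is to derive the corollary as a direct application of Theorem \ref{teorema1}, taking the curvature function $f = H_2$ (so $r=2$), the constant prescribed curvature $\psi \equiv c := R/(n(n-1))$, the boundary datum $\phi \equiv 1$, and the constant function $\bar\rho \equiv 1$ as the admissible subsolution, as suggested in the paragraph preceding the statement.

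First I would translate the problem. By the Gauss equation, the scalar curvature of a hypersurface in $\mathbb{R}^{n+1}$ equals $n(n-1)H_2(\kappa)$, so prescribing constant scalar curvature $R$ is equivalent to prescribing $H_2 \equiv c$. The hypothesis $0 < R < n(n-1)$ becomes the strict inequality $0 < c < 1$, and this is precisely what will make $\bar\rho \equiv 1$ a strict subsolution. The corresponding radial graph $\bar\Sigma$ is $\bar\Omega$ itself sitting inside $\mathbb{S}^n \subset \mathbb{R}^{n+1}$; with respect to the inward unit normal $N(x) = -x$ (pointing to the origin, consistent with the convention used in \eqref{equation}), the Weingarten operator equals the identity, so all principal curvatures of $\bar\Sigma$ equal $1$.

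From this computation the four hypotheses required of the subsolution are immediate: $(1,\ldots,1) \in \Gamma_2$ since $S_j(1,\ldots,1) = \binom{n}{j} > 0$ for $j=1,\ldots,n$, so $\bar\rho$ is admissible; $\bar\Sigma$ is strictly locally convex throughout $\bar\Omega$ because every principal curvature equals $1>0$, in particular near $\partial\Omega$; the boundary condition $\bar\rho = \phi$ on $\partial\Omega$ holds trivially; and $H_2(\kappa_{\bar\Sigma}) = 1 > c = \psi$ on $\Omega$. Theorem \ref{teorema1} then produces a smooth radial graph $\Sigma$ with $H_2(\kappa[\Sigma]) \equiv c$ and $\partial\Sigma = \partial\Omega$, which has constant scalar curvature $R$ by the Gauss equation. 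There is no genuine obstacle beyond the bookkeeping: the only subtle point is recognizing that the upper bound $R < n(n-1)$ is sharp precisely because it is what guarantees the strict inequality in \eqref{subsolution}, the unit sphere being the totally umbilic hypersurface whose umbilic value is exactly $1$.
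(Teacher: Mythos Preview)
Your proof is correct and follows exactly the approach indicated in the paper, which merely states that one uses $\bar\rho\equiv 1$ as a subsolution and applies Theorem~\ref{teorema1}. You have carefully filled in the verification that the unit sphere $\bar\Sigma$ (the radial graph of $\bar\rho\equiv 1$) has all principal curvatures equal to $1$ with respect to the inward normal $-x$, hence is admissible, locally strictly convex everywhere, satisfies $\bar\rho=\phi=1$ on $\partial\Omega$, and gives $H_2(\kappa_{\bar\Sigma})=1>c=\psi$ precisely because $R<n(n-1)$.
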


A central issue in solving \eqref{equation} is to derive {\it a priori}
$C^2$ estimates for admissible solutions.
The height and boundary gradient bounds follows from the existence of a subsolution and 
the assumption on the geometry of $\Omega.$ Hessian and gradient
interior estimates are obtained applying the results
of \cite{CNSIV} to a suitable auxiliary equation. 
Our main contribution here is the establishment of the second derivatives
estimates on the boundary without imposing any condition on the geometry of
$\Omega.$ As this estimate is of independent we describe it separately:

\begin{theorem}
\label{teorema1-2}
Let $\rho\in C^3(\Omega)\cap C^2(\bar\Omega)$ be an admissible solution of \eqref{equation}.
Suppose that there exists a smooth admissible 
subsolution $\bar{\rho}$ of \eqref{equation}, i.e., 
the radial graph $\bar\Sigma$: $\bar{X}(x)=\bar{\rho}(x)x$ satisfy
\begin{align}
\begin{split}
f(\kappa_{\bar{\Sigma}}[\bar{X}])>&\psi(x)\quad\textrm{in } \Omega\\
\bar{\rho}=&\phi\quad \textrm{on } \partial\Omega,
\end{split}
\end{align}
and $\bar\Sigma$ is locally strictly convex (up to the boundary) in 
a neighbourhood of $\partial\Omega.$ Then there exists a constant $C$
depending on $\sup_{\Omega}\bar\rho, 
\|\bar \rho \|_{C^2(\bar\Omega)}, $ the convexity of $\bar \Sigma$
in a neighbourhood of $\partial\Omega$ and other known data, that satisfies
\begin{equation}
|\nabla^2 \rho|< C \quad \textrm{on } \partial\Omega
\end{equation}
where $\nabla^2 \rho$ denotes the Hessian of $\rho.$
\end{theorem}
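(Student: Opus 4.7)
The plan is to fix a point $x_0\in\partial\Omega$ and work in an adapted local orthonormal frame $\{e_1,\ldots,e_{n-1},e_n\}$ on $\mathbb{S}^n$, with $e_n$ along the inward conormal of $\partial\Omega$ at $x_0$. Writing the equation $f(\kappa_\Sigma[X])=\psi$ as a fully nonlinear equation $G(\nabla^2\rho,\nabla\rho,\rho)=\psi$, the principal part depends concavely on the second fundamental form matrix $\{a_{ij}[\rho]\}$; denote $F^{ij}=\partial f/\partial a_{ij}$ and $\mathcal{T}=\sum_i F^{ii}$. Tangential-tangential entries of $\nabla^2\rho$ at $x_0$ are the easy case: differentiating $\rho-\bar\rho\equiv 0$ twice tangentially on $\partial\Omega$ yields $|\rho_{\alpha\beta}(x_0)|\leq C$ for $\alpha,\beta<n$ in terms of $\|\phi\|_{C^2}$, $\|\bar\rho\|_{C^2}$ and the geometry of $\partial\Omega$.

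For the mixed derivatives $\rho_{\alpha n}(x_0)$, introduce a tangential first order operator $T_\alpha$ extended to a neighbourhood of $x_0$ so that $T_\alpha(\rho-\bar\rho)\equiv 0$ on $\partial\Omega$ near $x_0$. Differentiating $G[\rho]=\psi$ along $T_\alpha$ and exploiting the concavity of $f^{1/r}$ gives $|L\,T_\alpha(\rho-\bar\rho)|\leq C(1+\mathcal{T})$, where $L$ is the linearized operator. Use a barrier on $\Omega_\delta=\Omega\cap B_\delta(x_0)$ of the form
\begin{equation}
\Psi = A(\bar\rho-\rho)+B|x-x_0|^2-\tfrac{1}{2}\sum_{\alpha<n}\bigl(T_\alpha(\rho-\bar\rho)\bigr)^2,
\end{equation}
with $1\ll B\ll A$ chosen sequentially. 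The strict subsolution inequality $f(\kappa_{\bar\Sigma})>\psi$, combined with $f^{1/r}$-concavity and the local strict convexity of $\bar\Sigma$ near $\partial\Omega$, yields $L(\bar\rho-\rho)\leq -\epsilon_0\mathcal{T}-\epsilon_1$; this makes $L\Psi$ sufficiently coercive to dominate $|L\,T_\alpha(\rho-\bar\rho)|$. Standard comparison and a Hopf-type argument on $\Omega_\delta$ then give $|\rho_{\alpha n}(x_0)|\leq C$.

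The double normal derivative is the main obstacle, precisely because $\Sigma$ is not assumed convex. Diagonalize the tangential block $(a_{\alpha\beta})_{\alpha,\beta<n}$ at $x_0$; by the preceding steps its eigenvalues are bounded, and since $\rho=\bar\rho$ on $\partial\Omega$ with $\bar\Sigma$ locally strictly convex there, these tangential eigenvalues stay close to the strictly positive tangential eigenvalues of $\bar\Sigma$, so $S_{r-1}$ of the block is bounded below by some $\delta_0>0$. With the mixed entries $a_{\alpha n}$ now controlled, the principal curvatures $\kappa_1,\ldots,\kappa_n$ at $x_0$ satisfy $\kappa_\alpha=a_{\alpha\alpha}+O(a_{nn}^{-1})$ for $\alpha<n$ and $\kappa_n=a_{nn}+O(1)$ as $a_{nn}\to\infty$. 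The recursion
\begin{equation}
S_r(\kappa)=\kappa_n\,S_{r-1}(\kappa_1,\ldots,\kappa_{n-1})+S_r(\kappa_1,\ldots,\kappa_{n-1}),
\end{equation}
together with $S_r(\kappa)=\psi\cdot S_r(1,\ldots,1)$ and the $\delta_0$ lower bound, forces $a_{nn}(x_0)\leq C$ from above; the lower bound $a_{nn}\geq -C$ follows from $\kappa\in\Gamma_r\subset\Gamma_1$, i.e. $\mathrm{tr}\{a_{ij}\}>0$, combined with the controlled tangential entries. Translating from $\{a_{ij}\}$ back to $\nabla^2\rho$ completes the estimate.

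The hardest part is the barrier construction for the mixed derivatives: $\mathcal{T}$ need not be bounded away from zero on $\Gamma_r$, so the coercivity of $L(\bar\rho-\rho)$ must be extracted jointly from the strict sign $f(\kappa_{\bar\Sigma})>\psi$ and the local strict convexity of $\bar\Sigma$. Concavity of $f^{1/r}$ (rather than concavity of $f$ itself) is what turns these two ingredients into a useful estimate, and this is where the hypothesis that $\bar\Sigma$ be strictly convex—not merely admissible—near $\partial\Omega$ is used in an essential way. The algebraic step for the double normal entry is comparatively routine once the tangential block is trapped inside $\Gamma_{r-1}$.
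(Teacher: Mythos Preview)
Your outline has the right architecture, but two steps contain genuine gaps that the paper works hard to close.

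\textbf{Mixed derivatives.} The claim that differentiating the equation tangentially yields $|L\,T_\alpha(\rho-\bar\rho)|\le C(1+\mathcal T)$ is not correct in this setting. Because the symmetric matrix $A[u]$ depends on $\nabla u$ (through $w$ and the $\gamma^{ij}$), differentiating $G(\nabla^2u,\nabla u,u)=\psi$ produces a term $G^s\nabla_{sk}u$, and $|G^s|$ is only controlled by $C(1+\sum_i f_i|\kappa_i|)$, not by $C(1+\mathcal T)$. For $H_r$ with $r\ge 2$ on the non-convex cone $\Gamma_r$, the quantity $\sum f_i|\kappa_i|$ (and the related $\sum f_i\kappa_i^2$) cannot be absorbed by $\mathcal T$ alone. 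The paper's Lemma~3.1 is devoted exactly to this: it takes $\Phi=\nabla_k(u-\varphi)-\tfrac{K}{2}\sum_{l<n}(\nabla_l(u-\varphi))^2$ with a \emph{large} coefficient $K$ (your $\tfrac12$ will not do), and then runs a case analysis on the orthogonal matrix diagonalizing $[F^{ij}]$ together with Ivochkina's inequality $\sum f_i\kappa_i^2\le C_0(\kappa_j\mathbf 1_{\kappa_j>0}+\sum_{k\ne j}f_k\kappa_k^2)$ to show $G^{ij}\nabla_{ij}\Phi\le M(1+|\nabla\Phi|+\mathcal T+G^{ij}\nabla_i\Phi\nabla_j\Phi)$. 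Only after this, and an exponential change $\tilde\Phi=1-e^{-a_0\Phi}$, does the barrier comparison go through. Your proposed inequality skips the mechanism that kills the $\sum f_i|\kappa_i|$ terms.

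\textbf{Double normal.} You assert that the tangential eigenvalues of $\Sigma$ at $x_0$ ``stay close to the strictly positive tangential eigenvalues of $\bar\Sigma$'', hence $S_{r-1}$ of the tangential block is bounded below. But on $\partial\Omega$ one has $\nabla_{\alpha\beta}u-\nabla_{\alpha\beta}\underline u=-\nabla_n(u-\underline u)\,B_{\alpha\beta}$, and $\nabla_n(u-\underline u)\ge 0$ is bounded but not small, while $B_{\alpha\beta}$ carries no sign hypothesis. So the tangential block of $A[u]$ need not lie in $\Gamma'_{r-1}$ with a uniform margin. The paper does not claim this; instead it uses the Caffarelli--Nirenberg--Spruck/Trudinger device: minimize over $\partial\Omega$ the distance $\tilde d(x)$ from $\kappa'(x)$ to $\partial\Gamma'_{r-1}$, build at the minimizing point a linear functional $\mu$ with $\nabla_n u\le\mu$ on $\partial\Omega$ nearby and equality at $x_0$, and then rerun the barrier argument with $\Phi=\mu-\nabla_n u-\tfrac{K}{2}\sum_{l<n}(\nabla_l(u-\varphi))^2$ to bound $\nabla_{nn}u(x_0)$. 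This bound at the worst point, combined with the asymptotics $\kappa_\alpha=\kappa'_\alpha+o(1)$, then forces a uniform lower bound on $S_{r-1}(\kappa')$ along all of $\partial\Omega$. Your recursion step is fine once $S_{r-1}(\kappa')\ge\delta_0$ is known, but obtaining that lower bound is itself a barrier argument, not a consequence of $\bar\Sigma$ being convex.
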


An outline of the paper is as follows.  In Section \ref{section2} we list some basic formulae
which are needed later and define two elliptic operators to express \eqref{equation}. 
In Section \ref{section3} we deal with the {\it a priori} estimates 
for prospective solutions and prove Theorem \ref{teorema1-2}.
Finally in Section \ref{section4} we complete the proof of Theorem \ref{teorema1}
using the continuity method and a degree theory argument with the aid of the established  estimates.

%%%%%%%%%%%%%%%%%%%%%%%%%%%%%%%%%%%%%%%%%%%%%%%%%%%%%%%%%%%%%%%%

\section{Preliminaries}
\label{section2}

In this section we recall the expressions of the second fundamental  form and other relevant geometric
quantities of a smooth radial graph $\Sigma$ given by $X(x)=\rho(x)x,$ where $\rho$ is a smooth
function defined in a domain $\Omega$ of the unit sphere 
$\mathbb{S}^n\subset\mathbb{R}^{n+1}.$

Let $e_1, \ldots, e_n$ be a smooth local orthonormal frame field on $\mathbb{S}^n$ and let $\nabla$ denote
the covariant differentiation on $\mathbb{S}^n.$ 
The metric of $\Sigma$ is then given in terms of $\rho$ by
\begin{equation}
g_{ij}=\langle\nabla_i X,\nabla_j X\rangle= \rho^2\delta_{ij}+\nabla_i\rho\nabla_j\rho, 
\end{equation}
where $\nabla_i=\nabla_{e_{i}}$ and $\langle \cdot , \cdot\rangle$ denotes the standard
 inner product in $\mathbb{R}^{n+1}.$ The interior unit normal to $\Sigma$ is
\begin{equation}
N=\frac{1}{(\rho^2+|\nabla \rho|^2)^{1/2}}(\nabla \rho-\rho x),
\end{equation}
where $\nabla\rho=\textrm{grad} \rho,$ and the second fundamental form of $\Sigma$ is
\begin{equation}
h_{ij}=\langle \nabla_{ij} X, N\rangle = \frac{1}{(\rho^2+|\nabla \rho|^2)^{1/2}}
(\rho^2\delta_{ij} +2\nabla_i\rho\nabla_j\rho-\rho\nabla_{ij}\rho),
\end{equation}
where $\nabla_{ij}=\nabla_i\nabla_j.$ 

Setting $u=1/\rho$ we can rewrite the expressions of the metric, its inverse 
and second fundamental form of $\Sigma$ at $X(x)= \frac{1}{u(x)}x$ in terms of $u$ by
\begin{equation}
g_{ij}=\frac{1}{u^2}(\delta_{ij}+\frac{1}{u^2}\nabla_iu\nabla_j u),
\quad g^{ij}=u^2\left(\delta_{ij}-\frac{\nabla_i u\nabla_j u}{w^2}\right)
\end{equation}
and
\begin{equation}
\label{segunda-forma}
h_{ij}=\frac{1}{uw}(u\delta_{ij}+\nabla_{ij}u),
\end{equation}
respectively, where $w=\sqrt{u^2+|\nabla u|^2}.$ The principal curvatures of $\Sigma$ are the
eigenvalues of the Weingarten matrix  $[h_i^j]=[g^{jk}h_{ki}].$ However, as in \cite{CNSIV}, 
here we will work with its similar symmetric matrix $A[u]=[a_{ij}]=[\gamma^{ik}h_{kl}\gamma^{jl}],$ 
where $[\gamma^{ij}]$ and its inverse matrix $[\gamma_{ij}]$ are given, respectively, by
\begin{equation}
\label{raiz-gij}
\gamma^{ij}= u\delta_{ij}-u\frac{\nabla_i u\nabla_j u}{w(u+w)}
\end{equation}
and
\begin{equation}
\label{raiz-gij2}
\gamma_{ij}= \frac{1}{u}\delta_{ij}+\frac{\nabla_i u\nabla_j u}{u^2(u+w)}.
\end{equation}
Geometrically, $[\gamma_{ij}]$ is the square root of the metric, i.e., $\gamma_{ik}\gamma_{kj}=g_{ij}.$ 

Now we present a reformulation of equation \eqref{equation} in the form
\begin{equation}
\label{equation2}
G(\nabla^2 u,\nabla u, u)=\tilde{\psi}(x),
\end{equation}
where $\tilde{\psi}=\psi^{1/r}.$
Let $\mathcal{S}$ be the space of all symmetric matrices and 
$\mathcal{S}_{r}$ the open subset of those symmetric matrices
$A\in\mathcal S$  for which the eigenvalues are
contained in $\Gamma_r.$  We define the function $F$ by
\begin{equation}
F(A)=f\big(\lambda(A)\big)=H_r^{1/r}\big(\lambda(A)\big), \quad A\in\mathcal{S}_{r}
\end{equation}
where $\lambda(A)=(\lambda_1, \cdots, \lambda_n)$ are the eigenvalues of $A.$ 
In the sequel we use $f$ to denote both $H_r$ and $H_r^{1/r}$.
Thus equation \eqref{equation} can be written in the form
\begin{equation}
F(A[u])=\tilde{\psi}\big(X(x)\big).
\end{equation}
Therefore, the function $G$ in \eqref{equation2} is defined by
\begin{equation}
G(\nabla^2 u,\nabla u, u)=F(A[u])
\end{equation}
and equation \eqref{equation} can be rewritten as
\begin{align}
\begin{split}
\label{equation3}
G(\nabla^2 u,\nabla u, u)&=\tilde{\psi} (x) \quad\textrm{in}\, \Omega,\\
u&=\varphi \quad\textrm{on}\, \partial\Omega,
\end{split}
\end{align}
where $\varphi=1/\phi.$

Next we will describe some properties of the functions $F$ and $G.$ 
We denote the first derivatives of $F$ by
\begin{equation*}
F^{ij}(A)=\frac{\partial F}{\partial a_{ij}}(A).
\end{equation*}
Since $H_r$ is positively monotonous on $\Gamma_r ,$ the symmetric matrix $[F^{ij}(A)]$
is positive definite  for any $A\in\mathcal{S}_{r}$
and it follows from the concavity of $H_r^{1/r}$ that $F$ is a concave function in $\mathcal{S}_{r}.$
$[F^{ij}(A)]$ and $A$ can be orthogonally diagonalized simultaneously. Thereafter, we have
\begin{equation}
\label{euler}
F^{ij}(A)a_{ij}=\sum f_i\lambda_i\leq f(\lambda)
\end{equation}
where the last inequality follows from the concavity of $f.$ Also we point out that
\begin{align}
\label{new} \sum f_i(\lambda)\lambda_i^2\leq C_0 (\lambda_j \mathbf{1}_{\lambda_j>0}&+
\sum_{k\neq j} f_k(\lambda)\lambda_k^2), \quad\textrm{for all } \lambda\in \Gamma_\psi,
\end{align}
where $\Gamma_\psi=\{\lambda\in\Gamma : \psi_0\leq f(\lambda)\leq\psi_1\}$ and
$C_0$ is a positive constant depending on $\psi_0$ and $\psi_1.$ This inequality was first proved
by Ivochkina in \cite{IVO2}. Using the expression for $A[u]$ we compute
\begin{equation}
\label{G-I-J}
G^{ij}=\frac{\partial G}{\partial\nabla_{ij}u}=\frac{1}{uw}\sum_{k,l}F^{kl} \gamma^{ik}\gamma^{jl}.
\end{equation}
Then equation \eqref{equation3} is elliptic for $A[u]\in\mathcal{S}_{r}.$
The concavity of $F$ implies that
$G$ is concave with respect to $\nabla_{ij}u.$ By assumption \eqref{subsolution}, the function
$\underline{u}=1/\bar\rho$ is a subsolution of equation \eqref{equation3}, i.e.,
\begin{align}
\begin{split}
\label{subsolution2}
G(\nabla^2 \underline  u,\nabla \underline u, \underline  u)&=\underline \psi(x)>\tilde{\psi}(x)
\quad\textrm{in}\, \Omega,\\
\underline  u&=\varphi \quad\textrm{on}\, \partial\Omega.
\end{split}
\end{align}

In order to establish the existence of solution for \eqref{equation} we will apply
the continuity method and a degree theory argument on two auxiliary forms of \eqref{equation3}.
Consider, for each fixed $t\in [ 0,1],$ the functions $\Psi^t$ and $\Xi^t$ defined in 
$\Delta= \{ X\in \mathbb{R}^{n+1} \, : \, \frac{X}{\|X\|}\in\overline\Omega\}$ by
\begin{equation}
\label{psi-t}
\Psi^t(\rho x)= \left(\frac{\overline \rho(x)}{\rho}\right)^3 \big(t\psi(x)+(1-t)
\underline{\psi}(x)\big)
\end{equation}
and
\begin{equation}
\label{phi-t}
\Xi^t(\rho x)= t\psi(x)+(1-t)\left(\frac{\overline \rho(x)}{\rho}\right)^3\psi(x).
\end{equation}
We shall work on the two corresponding auxiliary forms of \eqref{equation3}. In sections \ref{section3} 
and \ref{section4} we will represent generically  these equations by
\begin{align}
\begin{split}
\label{equation4}
G(\nabla^2 u,\nabla u, u)&=\Upsilon (X(x)) \quad\textrm{in}\, \Omega,\\
u&=\varphi \quad\textrm{on}\, \partial\Omega,
\end{split}
\end{align}
where $X(x)= \frac{1}{u(x)} x$ and $\Upsilon$ denotes a general positive smooth function on
$\Delta$. We finalize this section observing that the concavity of $f$ implies
\begin{equation}
\label{H-positiva}
\sum \lambda_i >0,
\end{equation}
for any $\lambda=(\lambda_1, \ldots, \lambda_n)\in\Gamma_{\psi}$ (e.g., \cite{CNSIII}).

%%%%%%%%%%%%%%%%%%%%%%%%%%%%%%%%%%%%%%%%%%%%%%%%%%%%%%%%%%%%%
%%%%%

\section{A priori estimates}
\label{section3}

In this section we obtain the {\it a priori} $C^2$ estimates for admissible solutions $u$ 
of \eqref{equation4} satisfying $u\geq \underline{u}.$ 

In order to derive an upper bound for $u,$ we note
that as the closure of $\Omega$ is contained in an open hemisphere and 
the mean curvature of $\partial\Omega$ is nonnegative, 
there exist \cite{SERRIN} 
a minimal radial graph $\hat{\Sigma}: \hat{X}(x)=\underline{\rho}(x)x$ over $\Omega$ with
boundary value $\underline{\rho}=\varphi.$ On the other hand, as \eqref{H-positiva} implies that 
the mean curvature of $\Sigma$ is positive,
we can apply the comparison principle to obtain $u\leq \overline{u},$ where 
$\overline{u}=1/\underline{\rho}.$
Then $\underline{u}\leq u\leq \overline{u}$ in $\Omega$ and $\underline{u}=u=\overline{u}$ on
$\partial\Omega,$ which yields the height and the boundary gradient bounds.
For the interior gradient estimate we first observe that
\begin{equation}
\label{derivada-ro}
\frac{\partial}{\partial\rho}(\rho\Upsilon(\rho x))\leq 0\quad \textrm{if} \quad \rho\leq \bar\rho.
\end{equation}
Therefore, the interior gradient bounds
can be established as in \cite{CNSIV} (see also \cite{ABL}). Then we
have the following result.
\begin{theorem}
\label{teorema2}
Let $u\geq \underline{u}$ an admissible solution of \eqref{equation4}. Then we have the estimates
\begin{equation}
\label{S3-22}
L^{-1}\leq u\leq L, \quad |\nabla u|\leq C \quad \textrm{in}\,\, \bar\Omega,
\end{equation}
where $L$ and $C$ depend on $\inf_{\Omega}\underline u, \|\underline u\|_{C^1(\bar\Omega)}$
and other known data.
\end{theorem}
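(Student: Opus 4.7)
The proof outline is largely sketched in the paragraphs immediately preceding the statement; my plan is to organize it into the three standard steps (height bounds, boundary gradient bounds, interior gradient bounds) and invoke the cited results carefully.

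First I would establish the height bounds. The lower bound $u \geq \underline{u}$ is assumed, so $u$ is bounded below by $\inf_\Omega \underline{u} > 0$. For the upper bound, I would invoke Serrin's result from \cite{SERRIN}: since $\overline{\Omega}$ lies in an open hemisphere and $\partial\Omega$ has nonnegative mean curvature as a submanifold of $\mathbb{S}^n$, there exists a minimal radial graph $\hat{\Sigma} : \hat{X}(x) = \underline{\rho}(x)x$ over $\Omega$ with $\underline{\rho}|_{\partial\Omega} = \varphi^{-1} = \phi$. Setting $\overline{u} = 1/\underline{\rho}$, this is a function with vanishing (hence in particular $\le \tilde{\Upsilon}$) mean curvature of its radial graph, while any admissible $u$ satisfies, by \eqref{H-positiva} applied to $\lambda \in \Gamma_r \subset \Gamma_1$, a strictly positive mean curvature equation. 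A standard comparison principle for the mean curvature operator then forces $u \leq \overline{u}$ in $\overline{\Omega}$, with equality on $\partial\Omega$.

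Next, the boundary gradient bound is immediate: since $\underline{u} \leq u \leq \overline{u}$ in $\overline{\Omega}$ and $\underline{u} = u = \overline{u} = \varphi$ on $\partial\Omega$, at any boundary point $x_0 \in \partial\Omega$ the normal derivatives satisfy
\begin{equation*}
\partial_\nu \overline{u}(x_0) \leq \partial_\nu u(x_0) \leq \partial_\nu \underline{u}(x_0),
\end{equation*}
where $\nu$ is the inward unit normal to $\partial\Omega$ in $\mathbb{S}^n$. Combined with tangential derivatives of $u$ on $\partial\Omega$ which equal those of $\varphi$, this yields $|\nabla u| \leq C$ on $\partial\Omega$ in terms of $\|\underline{u}\|_{C^1}$, $\|\overline{u}\|_{C^1}$, and $\|\varphi\|_{C^1}$.

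Finally, the interior gradient estimate is where the structural hypothesis on $\Upsilon$ enters. The key point is the observation \eqref{derivada-ro}: since $\rho = 1/u \leq \overline{\rho} = 1/\underline{u}$, the product $\rho\,\Upsilon(\rho x)$ is nonincreasing in $\rho$ in our range, which is exactly the monotonicity condition required to run the Caffarelli--Nirenberg--Spruck maximum-principle argument for radial graphs in \cite{CNSIV}. I would introduce the same test function as there (typically a combination of $\log w$ with a term involving $u$, using the strict concavity/ellipticity of $F$ together with \eqref{new} and \eqref{euler}) and apply it at an interior maximum; the monotonicity \eqref{derivada-ro} ensures the sign of the inhomogeneous term is favorable, so the maximum principle yields $|\nabla u| \leq C$ in terms of the already-controlled height bound and the boundary gradient bound. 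I expect this last step to be the only delicate one; however, since the computation is essentially identical to \cite{CNSIV} (see also \cite{ABL}) once \eqref{derivada-ro} is verified, I would simply reference that computation rather than redo it. Combining the three bounds gives \eqref{S3-22}.
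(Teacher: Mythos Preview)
Your proposal is correct and follows essentially the same three-step approach as the paper: the height bounds via Serrin's minimal radial graph combined with the comparison principle for mean curvature (using \eqref{H-positiva}), the boundary gradient bound from the sandwich $\underline{u}\le u\le \overline{u}$ with equality on $\partial\Omega$, and the interior gradient bound by citing \cite{CNSIV} (and \cite{ABL}) after verifying the monotonicity condition \eqref{derivada-ro}. The only cosmetic issue is the parenthetical ``hence in particular $\le \tilde{\Upsilon}$'' in your first step, which is not quite the right comparison (the relevant inequality is between mean curvatures, not with $\Upsilon$); but the substance of the argument is exactly what the paper does.
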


The only places we need assumptions on the geometry of $\Omega$
is in getting an upper bound for $u$ and 
the gradient boundary estimate. In what follows, when we use $L,$ it always means 
the same constant and we will denote
\begin{equation*}
\Delta_L=\{X\in \Delta
\, : \, L^{-1}\leq \|X\|\leq L\}.
\end{equation*}

Now we shall to establish the second derivatives estimates. First we will obtain
bounds for $|\nabla^2 u|$ on $\partial \Omega.$ 

Consider an arbitrary point $x_0\in\partial\Omega,$ let $e_1, \ldots, e_n$ be a local orthonormal frame 
field on $\mathbb{S}^n$ around $x_0,$ obtained by parallel translation of a local orthonormal frame field on
$\partial\Omega$ and the interior, unit, normal vector field to $\partial\Omega,$ along the geodesic
perpendicular to $\partial\Omega$ on $\mathbb{S}^n.$ We assume that $e_n$ is the parallel translation of 
the unit normal vector field on $\partial\Omega.$

As $u=\varphi$ on $\partial\Omega$ we have
\begin{equation}
\label{U-2-bordo}
\nabla_{ij}(u-\varphi)=-\nabla_{n}(u-\varphi)
B_{ij}\quad\textrm{for} \, i,j<n,
\end{equation} 
where $B_{ij}=\langle \nabla_{e_{i}} e_j, e_n\rangle$ is the second fundamental form of
$\partial\Omega.$ It follows that
\begin{equation}
\label{EST-TANG}
|\nabla_{ij} u(x_0)|\leq C, \quad i,j<n,
\end{equation}
for a uniform constant $C.$ 

We now proceed to estimate the mixed tangential-normal derivatives 
$\nabla_{k n} u(x_0), \, k<n.$ 
By a straightforward computation, we get
\begin{align}
\label{Gs}
\begin{split}
G^{s}&=\frac{\partial G}{\partial \nabla_s u}=-\frac{1}{w^2}F^{ij}a_{ij}\nabla_s u
-\frac{2}{u^2w} F^{ij}a_{ik}\gamma^{ks}\nabla_j u
\end{split}
\end{align}
and
\begin{align}
\label{Gu}
\begin{split}
G_u=&\frac{\partial G}{\partial u}= \frac{|\nabla u|^2}{uw^2}F^{ij}a_{ij}+\frac{2}{uw^2}\sum_kF^{ij}a_{ik}\nabla_j u
\nabla_ku \\ &+\frac{u}{w}F^{ij}\Big(\delta_{ij}-\frac{\nabla_i u\nabla_j u}{w^2}\Big).
\end{split}
\end{align}
In particular, as $[F^{ij}]$ and $[a_{ij}]$ can be diagonalized simultaneously, it follows
from Theorem \ref{teorema2} that
\begin{align}
\begin{split}
\label{bound-gs}
|G^s|\leq & C\big(1+\sum f_i |\kappa_i|\big) \\
|G_u|\leq C&\big(1+\sum (f_i |\kappa_i| + f_i)\big),
\end{split}
\end{align}
for a uniform constant $C$ depending on $\|u\|_{C^{1}(\bar\Omega)}$ and $\sup_{\Delta_L}\psi^t.$

Now we present some key preliminary lemmas.  
Let $\varrho(x)$ denote the distance from $x\in\Omega$ to $x_0,$ 
$\varrho(x)=\textrm{dist}_{\mathbb{S}^n}(x,x_0),$ and set
\begin{equation*}
\Omega_{\delta}=\{x\in\Omega\, :\, \varrho(x)<\delta\}.
\end{equation*}
Since $\nabla_{ij}\varrho^2(x_0)= 2\delta_{ij}$, by choosing $\delta>0$ sufficiently small we 
can assume that $\varrho$ is smooth in $\Omega_{\delta},$
\begin{equation}
\label{RHO-2}
\delta_{ij}\leq \nabla_{ij}\varrho^2\leq 3\delta_{ij} \quad\textrm{in}\,\, \Omega_\delta,
\end{equation}
and the distance function $d(x)=\textrm{dist}_{\mathbb{S}^n}(x,\partial\Omega)$ to the boundary 
$\partial\Omega$ is smooth in $\Omega_\delta.$ 

\begin{lemma}
\label{DEF-W}
For some positive constants $K$ and $M$  sufficiently large depending on $
\|u\|_{C^{1}(\bar\Omega)}, \|\Upsilon\|_{C^1(\Delta_L)}$ and other known data,
the function
\begin{equation}
\label{CARA-W}
\Phi=\nabla_k (u-\varphi) -\frac{K}{2}\sum_{l<n}\big(\nabla_l (u-\varphi)\big)^2
\end{equation}
satisfies
\begin{equation}
\label{LW}
G^{ij}\nabla_{ij} \Phi\leq M(1+|\nabla \Phi|+G^{ij}\delta_{ij}+G^{ij}\nabla_i \Phi\nabla_j \Phi) 
\quad \textrm{in}  \quad \Omega_\delta.
\end{equation}
\end{lemma}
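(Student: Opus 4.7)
A direct computation from (\ref{CARA-W}) gives
\begin{equation*}
\nabla_{ij}\Phi = \nabla_{ijk}(u-\varphi) - K\!\sum_{l<n}\!\nabla_l(u-\varphi)\nabla_{ijl}(u-\varphi) - K\!\sum_{l<n}\!\nabla_{il}(u-\varphi)\nabla_{jl}(u-\varphi).
\end{equation*}
Contracting with $G^{ij}$, the last sum contributes the essential \emph{good negative term} $-K\,G^{ij}\sum_{l<n}\nabla_{il}(u-\varphi)\nabla_{jl}(u-\varphi)\leq 0$, which will be used to absorb bad positive contributions before being dropped on passing to the $\leq$ in (\ref{LW}). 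The remaining two summands carry third covariant derivatives of $u$ (plus smooth $\varphi$-terms bounded by $C\,G^{ij}\delta_{ij}$) and will be removed by differentiating the equation.

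Differentiating $G(\nabla^2 u,\nabla u,u)=\Upsilon(X(x))$ in the direction $e_m$ and commuting covariant derivatives on $\mathbb{S}^n$ (whose constant sectional curvature $1$ produces only bounded remainders of order at most $C(1+G^{ij}\delta_{ij})$), one obtains
\begin{equation*}
G^{ij}\nabla_{ijm}u = \nabla_m(\Upsilon\circ X) - G^s\nabla_{sm}u - G_u\nabla_m u + \mathcal{R}_m.
\end{equation*}
Using this identity with $m=k$ in the first summand of $G^{ij}\nabla_{ij}\Phi$, and with $m=l<n$ multiplied by $-K\nabla_l(u-\varphi)$ and summed in the second, yields
\begin{equation*}
G^{ij}\nabla_{ij}\Phi \leq -K\,G^{ij}\!\sum_{l<n}\!\nabla_{il}(u-\varphi)\nabla_{jl}(u-\varphi) + \mathcal{E},
\end{equation*}
where $\mathcal{E}$ collects three kinds of terms: (i) $C(1+G^{ij}\delta_{ij})$-type contributions from $\nabla\Upsilon$, commutators, and derivatives of $\varphi$; (ii) cross terms $K\nabla_l(u-\varphi)G^s\nabla_{sl}u$ and $K\nabla_l(u-\varphi)G_u\nabla_l u$; (iii) terms containing $\sum_{l<n}\nabla_l(u-\varphi)\nabla_{jl}(u-\varphi)$, which is $-K^{-1}$ times the quadratic part of $\nabla_j\Phi$ and therefore fits into the $|\nabla\Phi|$ and $G^{ij}\nabla_i\Phi\nabla_j\Phi$ slots of (\ref{LW}) after one application of Cauchy--Schwarz.

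The type-(ii) terms are the crux. Bound (\ref{bound-gs}) gives $|G^s|,|G_u|\leq C(1+\sum f_i|\lambda_i|)$, which is \emph{not} controlled by $G^{ij}\delta_{ij}\sim\sum f_i$. My strategy is to apply Cauchy--Schwarz with respect to the positive-definite form $G^{ij}$, so that half of the good quadratic, namely $-\tfrac{K}{2}G^{ij}\sum_{l<n}\nabla_{il}(u-\varphi)\nabla_{jl}(u-\varphi)$, absorbs the $\nabla_{sl}u$ factor in each type-(ii) term, leaving residual scalars bounded by $C_K\sum f_i\lambda_i^2$. Ivochkina's inequality (\ref{new}) then trades these residuals for $C(G^{ij}\delta_{ij}+\lambda_j)$ with $j$ the index of the largest eigenvalue, and the remaining linear-in-$\lambda_j$ term is re-absorbed using the other half of the good quadratic (which, after diagonalization, contains $f_j\lambda_j^2$). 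Choosing $K$ large enough so that the absorbing factors dominate every Cauchy--Schwarz constant, and then $M$ larger than all resulting constants, gives (\ref{LW}).

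\textbf{Main obstacle.} The delicate point is that $G^s$ and $G_u$ both inherit a factor $\sum f_i|\lambda_i|$ that is not controlled by the natural trace $G^{ij}\delta_{ij}$. The proof closes only because the quadratic cutoff in (\ref{CARA-W}) produces, after differentiation, a positive-definite Hessian quadratic weighted by $G^{ij}$ that, in tandem with Ivochkina's inequality (\ref{new}), is precisely what is needed to absorb these bad cross terms. The structure of $\Phi$ and the size of $K$ must therefore be tuned jointly to Theorem \ref{teorema2}, the concavity identity (\ref{euler}), and the constants in (\ref{bound-gs}) and (\ref{new}).
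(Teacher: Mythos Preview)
Your overall plan correctly identifies the main ingredients---the differentiated equation, the negative quadratic coming from the $K$-term in $\Phi$, and Ivochkina's inequality (\ref{new})---but there is a genuine gap in the absorption step.

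The difficulty you overlook is that the good quadratic sums only over the \emph{tangential} indices $l<n$. Passing to the principal frame $\tau_1,\dots,\tau_n$ via the orthogonal matrix $P=[\eta_{ij}]$, one obtains (cf.\ (\ref{S2-1}))
\[
K\,G^{ij}\sum_{l<n}\nabla_{il}(u-\varphi)\nabla_{jl}(u-\varphi)\ \geq\ K\theta_0\sum_\alpha f_\alpha\kappa_\alpha^2\Big(\sum_{l<n}\eta_{l\alpha}^2\Big)\ -\ C\sum_\alpha\big(f_\alpha|\kappa_\alpha|+f_\alpha\big),
\]
and the weight $\sum_{l<n}\eta_{l\alpha}^2$ can be arbitrarily small for the single index $\gamma$ whose principal direction $\tau_\gamma$ is nearly aligned with the normal $e_n$. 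Hence the good quadratic does \emph{not}, in general, contain $f_\gamma\kappa_\gamma^2$ with any useful coefficient. Your assertion that ``the other half of the good quadratic \dots\ contains $f_j\lambda_j^2$'' (for $j$ the largest eigenvalue, or for any particular $j$) is therefore unjustified, and neither the residual $\sum f_\alpha\kappa_\alpha^2$ from your Cauchy--Schwarz step nor the linear $\lambda_j$ term coming out of (\ref{new}) can be absorbed as you propose.

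The paper closes this gap by a dichotomy on $P$: either $\sum_{l<n}\eta_{l\alpha}^2\geq K^{-2}$ for every $\alpha$, in which case the absorption does go through via (\ref{I2}), or the inequality fails for some index $\gamma$, in which case an elementary determinant computation (Appendix) forces $\sum_{l<n}\eta_{l\alpha}^2\geq\epsilon_0$ for all $\alpha\neq\gamma$ with $\epsilon_0$ \emph{independent of $K$}. One then applies (\ref{new}) with the specific choice $j=\gamma$, so that the good quadratic controls $\sum_{\alpha\neq\gamma}f_\alpha\kappa_\alpha^2$. The leftover contribution in $\kappa_\gamma$ still requires a further split on the sign of $\kappa_\gamma$: when $\kappa_\gamma>0$ one must exploit the explicit form of $\nabla_\gamma\Phi$ (which itself carries a term $\sim\kappa_\gamma$, yielding the $|\nabla\Phi|$ on the right of (\ref{LW})) together with (\ref{euler}) to convert $f_\gamma\kappa_\gamma$ into $\sum f_\alpha$ plus $\sum_{\alpha\neq\gamma}f_\alpha\kappa_\alpha^2$. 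This mechanism is absent from your sketch.
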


\begin{proof} 
A straightforward computation yields
\begin{align*}
G^{ij}&\nabla_{ij} \Phi =  G^{ij} \nabla_{ijk}u
-K \sum_{l<n}\nabla_l (u-\varphi)G^{ij}\nabla_{ijl}u-G^{ij}\nabla_{ijk}\varphi 
\\&-K \sum_{l<n}G^{ij}\nabla_{li} (u-\varphi)\nabla_{lj} (u-\varphi)
+K \sum_{l<n}\nabla_{l}(u-\varphi)G^{ij}\nabla_{ijl}\varphi.
\end{align*}
Then, using Theorem \ref{teorema2} we easily get the bound
\begin{align}
\label{conta-v3-2}
\begin{split}
G^{ij}&\nabla_{ij} \Phi \leq  G^{ij} \nabla_{ijk}u
-K \sum_{l<n}\nabla_l (u-\varphi)G^{ij}\nabla_{ijl}u
\\ & -K \sum_{l<n}G^{ij}\nabla_{li} (u-\varphi)\nabla_{lj} (u-\varphi)+ C\sum G^{ii},
\end{split}
\end{align}
for a uniform constant $C$ depending on $\|u\|_{C^{1}(\bar\Omega)}, 
\|\varphi\|_{C^{3}(\partial\Omega)}$ and $K.$
Differentiating equation \eqref{equation4} we get
\begin{align}
\label{CONTA-PSI-K-1}
G^{ij}\nabla_{pij} u+G^s\nabla_{ps}u+G_u\nabla_p u=\nabla_p\Upsilon.
\end{align}
Hence, applying the standard formula for commuting the order of covariant derivatives 
on $\mathbb{S}^n$ we obtain
\begin{align}
\label{v3-psik-2}
\begin{split}
G^{ij}\nabla_{ijp} u &= G^{ij}(\nabla_{pij}u+\delta_{ij}\nabla_pu-\delta_{pj}\nabla_i u)\\
 &= -G^s\nabla_{sp}u+G_u\nabla_p u
+G^{ij}(\delta_{ij}\nabla_pu-\delta_{pj}\nabla_i u)+\nabla_p\Upsilon.
\end{split}
\end{align}
Thus, as
\begin{align*}
%\label{W-I}
\begin{split}
G^{s}\nabla_{sk} u - &K \sum_{l<n}\nabla_l (u-\varphi)G^s\nabla_{sl}u=
G^s\nabla_s \Phi+G^s\nabla_{ks}\varphi \\ &-K \sum_{l<n}\nabla_l (u-\varphi)G^s\nabla_{sl}\varphi,
\end{split}
\end{align*}
we get
\begin{align*}
\begin{split}
&G^{ij}\nabla_{ijk}u-K \sum_{l<n}\nabla_l (u-\varphi)G^{ij}\nabla_{ijl}u =  -G^s\nabla_s \Phi
-G^s\nabla_{ks}\varphi \\ &+K \sum_{l<n}\nabla_l (u-\varphi)\big(G^s\nabla_{sl}\varphi
-G_u\nabla_l u-G^{ij}(\delta_{ij}\nabla_l u-\delta_{lj}\nabla_i u )-\nabla_l\Upsilon\big)
\\ &+G_u\nabla_ku+G^{ij}(\delta_{ij}\nabla_k u-\delta_{kj}\nabla_i u)+\nabla_k\Upsilon.
\end{split}
\end{align*}
Therefore, replacing this expression into \eqref{conta-v3-2} and using \eqref{S3-22}  and \eqref{bound-gs}
 we find
\begin{align}
\label{conta-v3-4}
\begin{split}
G^{ij}&\nabla_{ij} \Phi \leq   -G^s\nabla_s \Phi -K \sum_{l<n}G^{ij}\nabla_{li} (u-\varphi)\nabla_{lj} (u-\varphi)
\\& + C\big(1+ \sum (f_i |\kappa_i| + f_i)\big).
\end{split}
\end{align}

Let $P=[\eta_{ij}]$ be an orthogonal matrix that simultaneously diagonalizes $[F^{ij}]$ and
$[a_{ij}],$ and let $\{\tau_1, \ldots, \tau_n\}$ be a basis of vectors that induce by the
parametrization  $X$ a basis of principal vectors of $\Sigma,$ that is,
a basis of eigenvectors of the Weingarten operator of $\Sigma.$
Henceforth, we will use the greek letters for derivatives in the basis $\tau_1, \ldots, \tau_n$ 
and latin letters for derivatives in the frame $e_1, \ldots, e_n.$ For instance,
$\nabla_{\alpha\beta}u$ and $\nabla_{s\alpha}u$ will denote respectively
$\nabla^2u(\tau_\alpha, \tau_\beta)$ and $\nabla^2 u(e_s, \tau_\alpha).$ In particular, as
$\gamma_{\alpha\beta}$ is the unique positive square root of $g_{\alpha\beta},$ we have
\[
g_{\alpha\beta}=\langle \nabla_\alpha X,\nabla_\beta X\rangle=\gamma_{\alpha\beta}=\delta_{\alpha\beta}.
\]
Thus, inequality \eqref{LW} can be written as
\begin{align}
\begin{split}
\label{LW-2}
\sum f_\alpha \nabla_{\alpha\alpha}\Phi \leq  M\left(1+|\nabla \Phi|+
\sum (f_\alpha(\nabla_\alpha \Phi)^2+ f_\alpha)\right).
\end{split}
\end{align}
In the sequel, we will often denote by $C$ a uniform constant under control. 
As $\nabla_{ij} u=uw\sum_{k,l}\gamma_{ik}a_{kl}\gamma_{jl}-u\delta_{ij},$ it follows from 
\eqref{S3-22}  and \eqref{bound-gs} that
\begin{align}
\label{S2-1}
G^{ij}\nabla_{li} (u-\varphi)\nabla_{lj} (u-\varphi)\geq \sum_\alpha \big( \theta_{0} f_\alpha
\kappa_\alpha^2\eta_{l\alpha}^2-C (f_\alpha|\kappa_\alpha|+  f_\alpha)\big),
\end{align}
for a positive uniform constant $\theta_0.$ Similarly,  applying Theorem \ref{teorema2} and 
inequality \eqref{Gs} we get the bound
\begin{align}
\label{S2-2}
 |G^s\nabla_s \Phi|\leq C \big( |\nabla\Phi|+\sum f_\alpha|\kappa_\alpha \nabla_\alpha \Phi|\big).
\end{align}
Then we replace \eqref{S2-1}-\eqref{S2-2} into \eqref{conta-v3-4} to obtain
\begin{align}
\label{S2-3}
\begin{split}
\sum f_\alpha \nabla_{\alpha\alpha} \Phi &\leq\sum\big( Cf_\alpha|\kappa_\alpha \nabla_\alpha \Phi|
-K\theta_0\sum_{l<n} f_\alpha\kappa_\alpha^2\eta_{l\alpha}^2\big)
\\& + C\big(1+  |\nabla \Phi|+\sum (f_\alpha |\kappa_\alpha|+ f_\alpha)\big).
\end{split}
\end{align}

Now let us consider two cases. First we assume that, for all $\alpha\in\{1,\ldots, n\},$ it holds
\begin{equation}
\label{I}
\sum_{l<n}\eta_{l\alpha}^2\geq K^{-2}.
\end{equation}
The second case occurs when  (\ref{I}) does not hold.
In the first case
\begin{equation}
\label{I1}
-\sum_\alpha\sum_{l<n} f_\alpha\kappa_\alpha^2 \eta_{l\alpha}^2
\leq -\sum_\alpha f_\alpha\kappa_\alpha^2.
\end{equation}
Then \eqref{LW-2} follows from \eqref{S2-3} and the inequalities
\begin{align}
\begin{split}
\label{I2}
\sum f_\alpha|\kappa_\alpha \nabla_\alpha \Phi | &\leq \sum \big( \epsilon f_\alpha\kappa_{\alpha}^2
+\epsilon^{-1} f_\alpha(\nabla_\alpha \Phi)^2\big)
\\ \sum f_\alpha |\kappa_\alpha| &\leq \sum \big(\epsilon   f_\alpha \kappa_{\alpha}^2+\epsilon^{-1} f_\alpha\big), 
\end{split}
\end{align}
for an appropriate constant $\epsilon>0.$

In the second case, there exists some $1\leq \gamma\leq n$ such that
\begin{equation}
\label{delta}
\sum_{l<n}\eta_{l\gamma}^2< K^{-2}.
\end{equation}
As in \cite{IVO1}, we can prove (see the appendix) that \eqref{delta} implies
\begin{equation}
\label{delta2}
\sum_{l<n}\eta_{l\alpha}^2\geq\epsilon_0
\end{equation}
for all $\alpha\neq \gamma,$ where $\epsilon_0>0$ is a uniform positive constant that does
not depends on $K.$ For simplicity, let us assume that $\gamma=1.$ 
To proceed we consider two subcases: $\kappa_1\leq 0$ 
and $\kappa_1>0$. \\
If $\kappa_1\leq 0$ then inequality \eqref{new} yields
\begin{equation*}
f_1\kappa_1^2\leq  C_0\sum_{\alpha>1}f_\alpha\kappa_\alpha^2.
\end{equation*}
Hence
\begin{equation}
\label{C3}
\sum  f_\alpha\kappa_\alpha^2 \leq C \sum_{\alpha>1} f_\alpha \kappa_\alpha^2
\end{equation}
and we can estimate
\begin{align}
\begin{split}
\label{II2}
\sum f_\alpha| \kappa_\alpha\nabla_\alpha \Phi | &\leq  \sum f_\alpha\big(\nabla_\alpha \Phi\big)^2+
C\sum_{\alpha>1} f_\alpha\kappa_{\alpha}^2 \\ \sum f_\alpha |\kappa_\alpha| 
&\leq \sum  f_\alpha+ C \sum_{\alpha> 1} f_\alpha \kappa_{\alpha}^2.
\end{split}
\end{align}
On the other hand, it follows from \eqref{delta2} that
\begin{equation}
\label{C4}
\sum_\alpha \sum_{l<n} f_\alpha\eta_{l\alpha}^2\kappa_\alpha^2 
\geq \epsilon_0\sum_{\alpha>1} f_\alpha\kappa_\alpha^2.
\end{equation}
Applying \eqref{II2} and \eqref{C4} 
into \eqref{S2-3} we then obtain \eqref{LW-2} by choosing $K$ sufficiently
 large. 

Now suppose that $\kappa_1>0.$ A straightforward computation and the expression
$\nabla_{\alpha\beta} u=u(w\kappa_\alpha\delta_{\alpha\beta}-\sigma_{\alpha\beta}),$ where
$\sigma_{\alpha\beta}=\langle \tau_\alpha, \tau_\beta\rangle, $ yield
\begin{align*}
%\label{C5}
\begin{split}
f_1|\kappa_1 \nabla_1\Phi| &=
 f_1\kappa_1\big|\sum_{\alpha}\eta_{k\alpha}\nabla_{1\alpha }u-\nabla_{1k}\varphi
 -K\sum_{\alpha}\sum_{l<n}\eta_{l\alpha}\nabla_l
 (u-\varphi)\nabla_{1\alpha}(u-\varphi)\big| \\ &
\leq  uw\big|\eta_{k1}-K\sum_{l<n}\eta_{l1}\nabla_l (u-\varphi)\big| f_1\kappa_1^2
+ Cf_1\kappa_1.
\end{split}
\end{align*}
Moreover,
\begin{align*}
\begin{split}
|\nabla\Phi| & \geq |\nabla_1\Phi|=\big|\sum_{\alpha}\eta_{k\alpha}\nabla_{1\alpha }u
-\nabla_{1k}\varphi-K\sum_{\alpha}\sum_{l<n}\eta_{l\alpha}\nabla_l
 (u-\varphi)\nabla_{1\alpha}(u-\varphi)\big| 
\\ & \geq \big|\sum_{\alpha}\eta_{k\alpha}u(w\kappa_\alpha -1)\delta_{\alpha 1}
-K\sum_{\alpha} \sum_{l<n}\eta_{l\alpha}\nabla_l (u-\varphi)u(w\kappa_\alpha -1)\delta_{\alpha 1}\big|-C\\
&\geq  uw\big|\eta_{k1}-K\sum_{l<n}\eta_{l1}\nabla_l (u-\varphi)\big|\kappa_1-C.
\end{split}
\end{align*}
Thus, applying \eqref{new} we get the bound
\begin{align}
\label{S2-6}
\begin{split}
f_1|\kappa_1 &\nabla_1\Phi| \leq  C(1+|\nabla \Phi|)+Cf_1\kappa_1 
\\ &+C\Big|\eta_{k1}-K\sum_{l<n}\eta_{l1}\nabla_l (u-\varphi)\Big| 
\sum_{\alpha>1}f_\alpha\kappa_\alpha^2.
\end{split}
\end{align}
On the other hand, inequality \eqref{delta} gives
\begin{equation*}
\Big|\eta_{k1}-K\sum_{l<n}\eta_{l1}\nabla_l (u-\varphi)\Big| \leq C_1,
\end{equation*}
for a uniform positive constant $C_1$ that does not depend on $K.$ Therefore
\begin{equation}
\label{S2-7}
f_1|\kappa_1 \nabla_1\Phi|\leq  C(1+|\nabla \Phi|)+Cf_1\kappa_1
+C_1\sum_{\alpha>1} f_\alpha \kappa_\alpha^2.
\end{equation}
To control the term $f_1\kappa_1$ we use \eqref{euler} to obtain
\begin{align}
\label{S2-8}
\begin{split}
f_1\kappa_1&= f_\alpha\kappa_\alpha -\sum_{\alpha>1}f_\alpha\kappa_\alpha
 \\ &\leq C\big(1+\sum f_\alpha\big)+\sum_{\alpha>1}f_\alpha\kappa_\alpha^2.
\end{split}
\end{align}
Then
\begin{equation*}
f_1|\kappa_1 \nabla_1\Phi|\leq  C(1+|\nabla \Phi|+\sum f_\alpha)
+C_1\sum_{\alpha>1} f_\alpha \kappa_\alpha^2
\end{equation*}
and we get the bound
\begin{align}
\label{S2-9}
\begin{split}
\sum f_\alpha|\kappa_\alpha \nabla_\alpha \Phi | & \leq C(1+|\nabla \Phi|+\sum f_\alpha)
+C_1\sum_{\alpha>1} f_\alpha \kappa_\alpha^2\\ &+
\sum_{\alpha>1} f_\alpha\kappa_{\alpha}^2+
\sum f_\alpha(\nabla_\alpha \Phi)^2.
\end{split}
\end{align}
Finally, as $\kappa_1>0$ we can use \eqref{S2-8} to get
\begin{align}
\label{S2-10}
\begin{split}
f_\alpha |\kappa_\alpha|=\big(f_1\kappa_1+\sum_{\alpha>1}f_\alpha |\kappa_\alpha |\big)\leq 
C\big(1+\sum f_\alpha\big)+\sum_{\alpha>1}f_\alpha \kappa_\alpha^2.
\end{split}
\end{align}
Hence, using \eqref{C4} and replacing \eqref{S2-9} and \eqref{S2-10} into \eqref{S2-3}, 
we obatin \eqref{LW-2} by choosing $K$ sufficiently  large. 
\end{proof}

Setting
\begin{equation}
\label{w-til}
\tilde{\Phi}= 1-e^{-a_0 \Phi}
\end{equation}
for a positive constant  $a_0$ large such that $a_0\geq M,$ where $M$ is the constant given
in \eqref{LW}, we get
\begin{align}
\label{LW-TIL}
G^{ij}\nabla_{ij}\tilde{\Phi} \leq   M(1+|\nabla \tilde{\Phi}|+G^{ij}\delta_{ij}).
\end{align}

Now we present the following improved version of Lemma 3.3 in \cite{SU}. In what follows, we denote by
$d(x)=\textrm{dist}(x,\partial\Omega)$ the distance function to the boundary.
\begin{lemma}
\label{DEF-V}
There exist some uniform positive constants $t, \delta , \varepsilon$ sufficiently small and 
$N$ sufficiently large depending on $\inf_{\bar\Omega}\underline u, 
\|\underline u\|_{C^2(\bar\Omega)}, \sup_{\Delta_L}\Upsilon,$ the convexity of $\bar\Sigma$
in a neighbourhood of $\partial\Omega$ and other known data, such that the function
\begin{equation}
\label{V}
\Theta=u-\underline{u} +td-Nd^2
\end{equation}
satisfies
\begin{equation}
\label{LV<0}
G^{ij}\nabla_{ij}\Theta\leq -(1+|\nabla \Theta|+ G^{ij}\delta_{ij}) \quad \textrm{in }  \Omega_\delta
\end{equation}
and
\begin{equation}
\label{theta-bordo}
\Theta\geq 0 \quad\textrm{on } \partial\Omega_\delta.
\end{equation}
\end{lemma}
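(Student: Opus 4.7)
The plan is to verify the two defining properties of the barrier $\Theta$ separately --- the boundary positivity \eqref{theta-bordo} and the interior supersolution inequality \eqref{LV<0} --- and to fix the three free parameters $t,\delta,N$ at the end by a balancing argument.

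For \eqref{theta-bordo} I would split $\partial\Omega_\delta=(\partial\Omega\cap\overline{\Omega}_\delta)\cup(\{\varrho=\delta\}\cap\overline{\Omega})$. On the first piece $u=\underline{u}=\varphi$ and $d=0$, so $\Theta=0$. On the second piece Theorem \ref{teorema2} gives $u\geq\underline{u}$ and, since $x_0\in\partial\Omega$, the triangle inequality forces $d\leq\varrho=\delta$; hence $\Theta\geq d(t-Nd)\geq d(t-N\delta)\geq 0$ as soon as $\delta\leq t/N$. This is the first constraint relating my constants.

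For \eqref{LV<0} I would start with the decomposition
\begin{equation*}
G^{ij}\nabla_{ij}\Theta = G^{ij}\nabla_{ij}(u-\underline{u}) + (t-2Nd)G^{ij}\nabla_{ij}d - 2NG^{ij}\nabla_i d\,\nabla_j d
\end{equation*}
and treat each summand in turn. For the first, concavity of $G$ in its $\nabla^2 u$ slot (inherited from the concavity of $F$ on $\mathcal S_r$) produces the subsolution-type bound $G^{ij}\nabla_{ij}(u-\underline{u})\leq\Upsilon(X)-G(\nabla^2\underline{u},\nabla u,u)$. Combining the strict subsolution gap $G(\nabla^2\underline{u},\nabla\underline{u},\underline{u})>\Upsilon+2\epsilon_0$ with the local strict convexity of $\bar\Sigma$ (which, for $\delta$ sufficiently small, gives $A[\underline{u}]\geq c_0 I$ in $\Omega_\delta$ and therefore $F^{ij}(A[u])A[\underline{u}]_{ij}\geq c_0\sum F^{ii}$), and then absorbing the perturbation $(\nabla\underline{u},\underline{u})\to(\nabla u,u)$ by means of the estimates $|u-\underline{u}|\leq Cd$, the boundary vanishing of tangential components of $\nabla(u-\underline{u})$, the bounds \eqref{bound-gs}, and Ivochkina's inequality \eqref{new}, I would arrive at a key estimate of the shape $G^{ij}\nabla_{ij}(u-\underline{u})\leq -\epsilon_1\sum G^{ii}+C$. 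The middle term is controlled by the standard bound $|\nabla^2 d|\leq C$ in $\Omega_\delta$, giving $|(t-2Nd)G^{ij}\nabla_{ij}d|\leq C(t+N\delta)\sum G^{ii}$, which is absorbed by $-\epsilon_1\sum G^{ii}$ once $t$ and $N\delta$ are chosen small. The last term is strictly negative because $|\nabla d|=1$ and $G^{ij}$ is positive definite; since $|\nabla\Theta|$ stays bounded uniformly in $N$ whenever $\delta\leq t/N$, taking $N$ large makes $-2NG^{ij}\nabla_i d\,\nabla_j d$ dominate the residual constant $C(1+|\nabla\Theta|)$ on the right-hand side of \eqref{LV<0}.

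The constants would then be fixed in the order $t$ small, $N$ large, and $\delta\leq t/N$ chosen also small enough that $\bar\Sigma$ remains strictly convex throughout $\Omega_\delta$ and that \eqref{RHO-2} holds. The main obstacle is the first term: cleanly converting the strict subsolution gap into a genuinely negative multiple of the trace $\sum G^{ii}$, since the naive perturbation of the arguments of $G$ produces contributions from \eqref{bound-gs} involving the potentially unbounded quantity $\sum f_\alpha|\kappa_\alpha|$. Absorbing these via \eqref{new} while exploiting the \emph{quantitative} strict convexity of $\bar\Sigma$ (so that $A[\underline{u}]$ is uniformly positive definite on $\Omega_\delta$, not merely admissible) is precisely where the local convexity hypothesis in \eqref{subsolution} earns its keep.
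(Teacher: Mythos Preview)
Your decomposition contains a genuine gap in its final step. The claim that ``taking $N$ large makes $-2NG^{ij}\nabla_i d\,\nabla_j d$ dominate the residual constant $C(1+|\nabla\Theta|)$'' would require $G^{ij}\nabla_i d\,\nabla_j d$ to admit a uniform positive lower bound. But at this point in the argument only the $C^1$ norm of $u$ is controlled (Theorem~\ref{teorema2}); the principal curvatures of $\Sigma$ are not yet bounded. For $f=H_r^{1/r}$ with $r>1$, if one curvature $\kappa_j$ becomes very large while the others stay bounded then $f_j\sim\kappa_j^{(1-r)/r}\to 0$, so the smallest eigenvalue of $[G^{ij}]$ may be arbitrarily small. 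Since the only general lower bound available is $G^{ij}\nabla_i d\,\nabla_j d\geq\lambda_{\min}(G^{ij})|\nabla d|^2$, no fixed choice of $N$ forces $NG^{ij}\nabla_i d\,\nabla_j d$ to exceed a given constant uniformly in $u$. Your trace estimate $G^{ij}\nabla_{ij}(u-\underline u)\leq C-\epsilon_1\sum G^{ii}$ is correct (and reappears as \eqref{V-1} in the paper), but by itself it only handles the $G^{ij}\delta_{ij}$ portion of \eqref{LV<0}; the additive piece $-1-|\nabla\Theta|$ remains unaccounted for. Contrary to your closing remark, this last term---not the first---is where the real difficulty lies.

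The paper closes the gap by \emph{not} separating the $-Nd^2$ contribution from $u-\underline u$ in the concavity step. It applies the concavity of $F$ once, with comparison matrix built from $\underline u\,\delta_{kl}+\nabla_{kl}\underline u+N\nabla_{kl}d^2-2\beta\delta_{kl}$, and then observes (via the strict convexity of $\bar\Sigma$ and the bound $2Nd|\nabla^2 d|\leq\beta$ for $\delta$ small) that after the $\gamma$-conjugation this matrix dominates one with eigenvalues $(\lambda_0,\dots,\lambda_0,\lambda_0+N\mu_0)$ for uniform $\lambda_0,\mu_0>0$. Since $H_r^{1/r}(\lambda_0,\dots,\lambda_0,\lambda_0+N\mu_0)\to+\infty$ as $N\to\infty$, one obtains
\[
G^{ij}\nabla_{ij}\Theta\leq\Upsilon-f(\lambda_0,\dots,\lambda_0,\lambda_0+N\mu_0)+(tC-2\beta)G^{ij}\delta_{ij},
\]
and the large negative constant comes from $-f(\dots)$, a quantity \emph{independent of the unknown curvatures of $u$}. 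The essential point is that the rank-one perturbation $2N\nabla d\otimes\nabla d$ must sit inside the argument of $F$, where the nonlinearity of $f$ converts it into a large constant, rather than outside as the linearized term $-2NG^{ij}\nabla_i d\,\nabla_j d$, which is merely nonnegative.
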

\begin{proof}
As the surface $\bar{X}(x)=\frac{1}{\underline u}x$ is convex in a neighbourhood of $\partial\Omega,$ 
we can find $\beta>0$ and $\delta>0$ such that
\begin{equation}
\label{convex1}
[\underline{u} I+\nabla^2\underline{u}]\geq 4\beta I \quad \textrm{in } \,\Omega_{\delta}.
\end{equation}
In particular, $\lambda(\underline{u}I+\nabla^2\underline{u}-3\beta I)$
lies in a compact set of $\Gamma_n^+\subset\Gamma_r.$
Since $|\nabla d|=1$ and $-CI\leq \nabla^2d\leq C I,$ 
for a constant $C$ depending only on the geometry of $\Omega,$ we have 
\begin{equation}
\label{d1}
G^{ij}\nabla_{ij} d\leq C G^{ij}\delta_{ij}
\end{equation}
 and
\begin{align}
\begin{split}
\label{convex3}
\lambda(\underline{u}I+\nabla^2\underline{u}+N \nabla^2d^2-2\beta I) 
\geq \lambda(\underline{u}I+\nabla^2\underline{u}+2N \nabla d\otimes\nabla d-3\beta I)
\end{split}
\end{align}
in $\Omega_\delta,$ when $\delta$ is sufficiently small (so that $2N\delta<\beta/C).$ 
Using the concavity of $f$ we get
\begin{align*}
 F\Big( & \big[\frac{1}{uw}\gamma^{ik}(\underline{u}\delta_{kl}
+\nabla_{kl}\underline{u}+2N\nabla_l d\nabla_k d
-3\beta \delta_{lk})\gamma^{jl}\big]\Big)-\psi\big(X\big)\\
& \leq G^{ij}\Big(\nabla_{ij}\underline{u}+\underline{u}\delta_{ij} 
+N\nabla_{ij}d^2-2\beta\delta_{ij}-(u\delta_{ij}+\nabla_{ij}u)\Big) 
\\ & = G^{ij} \nabla_{ij}(\underline{u}-u+Nd^2)+(\underline{u}-u)G^{ij}\delta_{ij}
-2\beta G^{ij}\delta_{ij}.
\end{align*}
Then, using \eqref{d1}, \eqref{convex3} and that $u\geq \underline u,$  we get
\begin{align}
\begin{split}
\label{conta-convex1}
G^{ij}&\nabla_{ij}(u-\underline{u}+td-Nd^2)\leq \Upsilon\big(X\big)-2\beta G^{ij}\delta_{ij}
+tCG^{ij}\delta_{ij}\\ &- F\Big( \big[\frac{1}{uw}\gamma^{ik}
(\underline{u}\delta_{kl}+\nabla_{kl}\underline{u}+2N\nabla_l d\nabla_k d
-3\beta \delta_{lk})\gamma^{jl}\big]\Big) \\
 = & - f\Big( \lambda\big(\frac{1}{uw}\gamma^{ik}
(\underline{u}\delta_{kl}+\nabla_{kl}\underline{u}-3\beta \delta_{kl})\gamma^{jl}
+\frac{2N}{uw}\gamma^{ik}\nabla_l d\nabla_k d\gamma^{jl}\big)\Big)\\
&+(tC-2\beta)G^{ij}\delta_{ij}+\Upsilon\big(X\big) .
\end{split}
\end{align}
By the choice of $\beta$ and Theorem \ref{teorema2}, there exists a uniform positive constant 
$\lambda_0$ satisfying
\begin{equation}
\label{convex4}
\Big[\frac{1}{uw}\gamma^{jk}(\underline{u}\delta_{kl}
+\nabla_{kl}\underline{u}-3\beta \delta_{lk})\gamma^{jl}\Big]\geq \lambda_0 I.
\end{equation}
Then we can find a uniform positive constant $\mu_0$ such that
\begin{align*}
P^T[\frac{1}{uw}\gamma^{ik}
(\underline{u}\delta_{kl}+&\nabla_{kl}\underline{u}-3\beta \delta_{kl})\gamma^{jl}
+\frac{2N}{uw}\gamma^{ik}\nabla_l d\nabla_k d\gamma^{jl}\big]P \\ & \geq  
\textrm{diag}\{\lambda_0,\lambda_0, \ldots, \lambda_0+N\mu_0\},
\end{align*}
where $P$ is an orthogonal matrix that diagonalizes 
$\big[\frac{2N}{uw}\gamma^{kl}\nabla_l d\nabla_k d\gamma^{jl}\big].$
Then, by the ellipticity and concavity of $f$ we get
\begin{align*}
\begin{split}
f&\Big( \lambda\big(\frac{1}{uw}\gamma^{ik}
(\underline{u}\delta_{kl}+\nabla_{kl}\underline{u}-3\beta \delta_{lk})\gamma^{jl}
+\frac{2N}{uw}\gamma^{ik}\nabla_l d\nabla_k d\gamma^{jl}\big)\Big)\\
&=f\Big( \lambda\big(P^T\big[\frac{1}{uw}\gamma^{ik}(\underline{u}\delta_{kl}+
\nabla_{kl}\underline{u}-3\beta \delta_{lk})\gamma^{jl}
+\frac{2N}{uw}\gamma^{ik}\nabla_l d\nabla_k d\gamma^{jl}\big]P\big)\Big) \\ &
 \geq f(\lambda_0,\lambda_0, \ldots, \lambda_0+N\mu_0).
\end{split}
\end{align*} 
Since 
\begin{equation*}
f(\lambda_0,\lambda_0, \ldots, \lambda_0+N\mu_0)
\rightarrow +\infty \quad \textrm{as} \quad N\rightarrow+\infty,
\end{equation*}
it follows from \eqref{conta-convex1} 
that, for $t$ small enough such that 
$Ct\leq \beta$ and $N$ sufficient large, we have
\begin{equation*}
G^{ij}\nabla_{ij}\Theta\leq -C-3t-\beta G^{ij}\delta_{ij}
\end{equation*}
where $C$ is a uniform constant that satisfies $|\nabla(u-\underline u)|\leq C.$ Finally, choosing 
$\delta$ even smaller, such that $\delta N<t,$ we get $|\nabla \Theta|\leq C+3t$ 
and $\Theta\geq 0$ on $\partial(\Omega\cap \Omega_\delta).$ 
\end{proof}

We are now in position to derive the mixed second derivatives boundary estimate.
Consider the functions
\begin{equation}
\label{cara-w-barra}
\bar{\Phi}=\tilde{\Phi}+b_0(u-\underline{u})
\end{equation}
and
\begin{equation}
\label{cara-theta}
\bar{\Theta}=-c_0\varrho^2-d_0 \Theta,
\end{equation}
where $b_0, c_0$ and $d_0$ are positive constants to be chosen. 
Following the reasoning in the proof of Lemma \ref{DEF-V}, we can easily prove that
\begin{align}
\label{V-1}
G^{ij}\nabla_{ij}(u-\underline{u})\leq C-\beta G^{ij}\delta_{ij}
\end{align}
in $\Omega_\delta,$ for sufficiently small $\delta,\beta>0.$
Then we conclude from \eqref{LW-TIL} that
\begin{equation*}
G^{ij}\nabla_{ij}\bar{\Phi} \leq M(1+b_0+|\nabla \tilde{\Phi}|)+b_0\big(C-\beta G^{ij}\delta_{ij}\big).
\end{equation*}
Hence, choosing $b_0$ sufficiently large we get 
\begin{equation}
\label{INEL-PHI}
G^{ij}\nabla_{ij}\bar{\Phi}\leq M_0(1+|\nabla \bar{\Phi}|)\quad \textrm{in } \Omega_\delta,
\end{equation}
for a uniform positive constant $M_0=M_0(a_0,b_0, M).$

On the other hand, using Lemma \ref{DEF-V} and inequality \eqref{RHO-2}
we can estimate
\begin{align*}
G^{ij}\nabla_{ij}\bar{\Theta} =& -c_0G^{ij}\nabla_{ij}\varrho^2-d_0G^{ij}\nabla_{ij} d \\
& \geq -3c_0 G^{ij}\delta_{ij}+d_0\big(1+|\nabla \Theta|+\beta G^{ij}\delta_{ij}\big).
\end{align*}
As $|\nabla \bar{\Theta}|\leq 2\delta c_0 + d_0|\nabla \Theta|,$ choosing $d_0>>c_0$
sufficiently large, we get
\begin{equation}
\label{INEL-THETA}
G^{ij}\nabla_{ij}\bar{\Theta}\geq M_0(1+|\nabla \bar{\Theta}|) \quad \textrm{in } \Omega_\delta.
\end{equation}

Now we compare $\bar{\Phi}$ and $\bar{\Theta}$ on $\partial\Omega_\delta.$ 
At this point we need to assume that the index $k$ fixed in \eqref{CARA-W} where
$\Phi$ is defined, is chosen so that 
$1\leq k\leq n-1.$ In particular, $e_k$ is tangent along $\partial\Omega$
and we have $\bar{\Phi}=0$ on $\partial\Omega_\delta\cap \partial\Omega.$ 
By \eqref{theta-bordo} we have $\bar{\Theta}\leq -c_0\varrho^2$ on 
$\partial\Omega_\delta,$ then
$\bar{\Phi}=0\geq  -c_0\varrho^2\geq \bar{\Theta}$ on $\partial\Omega_\delta\cap \partial\Omega.$ 
For $\partial\Omega_\delta\cap\Omega,$
notice that $|\bar{\Phi}|\leq C$ on $\partial\Omega_\delta\cap\Omega$ for a uniform constant $C.$
Hence,  choosing $c_0$ sufficiently large we get
\begin{equation*}
\bar{\Theta}=-c_0\varrho^2-d_0 \Theta\leq -c_0 \delta^2\leq \bar{\Phi}
 \quad\textrm{on} \quad \partial\Omega_\delta\cap \Omega.
\end{equation*}
Therefore
\begin{equation}
\label{PHI-THETA}
\bar{\Theta}\leq \bar{\Phi} \quad \textrm{on} \, \partial\Omega_\delta.
\end{equation}

Finally, it follows from \eqref{INEL-PHI}, \eqref{INEL-THETA} and the Comparison Principle (see e.g. \cite{GT}) 
that $\bar{\Phi}\geq \bar{\Theta}$
in $\Omega_\delta.$ As $\bar{\Phi}(x_0)=\bar{\Theta}(x_0)$ we get 
$\nabla_n \bar{\Theta}(x_0)\leq \nabla_n \bar{\Phi}(x_0),$ which give us
\begin{equation}
\label{mista-explicita}
\nabla_{kn}u(x_0) \geq \nabla_{kn}\varphi(x_0)
-\frac{d_0}{a_0}\big(\nabla_n(u-\underline u)(x_0)+t\big).
\end{equation}
for $1\leq k\leq n-1.$
Then, as $x_0\in\partial\Omega$ is arbitrary, we get
\begin{equation}
\label{TANGENTE-NORMAL}
|\nabla_{kn} u|<C \quad \textrm{on}\, \partial\Omega.
\end{equation}
The mixed second derivatives boundary estimate is established.

Now we consider the pure normal second derivative bound.
Since $\Sigma$ has positive mean curvature, we only need to derive an upper bound
\begin{equation}
\label{NORMAL-NORMAL}
\nabla_{nn}u < C \quad \textrm{on}\,\, \partial\Omega.
\end{equation}

Let $\kappa'=(\kappa'_1\ldots,\kappa'_{n-1})$ the roots of $\det(h_{\alpha\beta}-t g_{\alpha\beta})
=0\, (1\leq \alpha,\beta\leq n-1).$ Notice that $\kappa'$ do not denotes the first $n-1$ principal curvatures of $\Sigma.$
For an arbitrary fixed $x\in\partial\Omega,$
let $\tau_1, \ldots, \tau_{n-1}\in T_x\partial\Omega$ be a basis of vectors
that diagonalize $h_{\alpha\beta}$ with
respect to the inner product defined by $g_{\alpha\beta}.$ Then the function curvature $S_r$ of 
the hypersurface $\Sigma$ at $X(x)$ is given by (see, for instance, \cite{ALM})
\begin{align}
\label{Sr-boundary}
\begin{split}
S_r=S_{r-1}(\kappa')\nabla_{nn} u+ D 
\end{split}
\end{align}
where $D$ depends only on $u, \nabla u$ and the tangential and mixed second derivatives of $u.$ Therefore
\begin{align*}
S_{r-1}(\kappa')=\frac{\partial S_r}{\partial \nabla_{nn}u}=\frac{\partial F}{\partial a_{ij}}\frac{\partial a_{ij}}{\partial \nabla_{nn}u}
=g^{nn}\frac{\partial F}{\partial a_{nn}}>0,
\end{align*}
by ellipticity. In particular,  $(\kappa',0)\in \Gamma_{r-1}\subset\mathbb{R}^n.$ 
Now we adapt the techniques used in \cite{CNSIII} and \cite{GUAN-2},
which are based on a brilliant idea introduced by Trudinger in \cite{TRU-2}. First we show that 
an upper bound on $\nabla_{nn} u$ on $\partial \Omega$ amounts to a lower bound on $S_{r-1}(\kappa')$
on $\partial \Omega$ by a uniform positive quantity.

Let $\Gamma'_{r-1}$ be the projection of $\Gamma_{r-1}$ into $\mathbb{R}^{n-1}$ and
denote by $\tilde d(x)$ the distance from $\kappa'(x)$ to $\partial\Gamma'_{r-1}.$
In what follows, we estimate $\nabla_{nn} u$ at a point $x_0$ of $\partial\Omega$ where $\tilde d$ 
is minimum.
So, let $x_0\in\partial\Omega$ be a point where $\tilde d$ attains its minimum.
As above, choose a local frame field $\tau_1, \ldots, \tau_{n-1}$  on $\partial\Omega$ 
around $x_0$ which is orthogonal with respect to the inner product given by
$g_{\alpha\beta}$ and that diagonalizes $h_{\alpha\beta}$ at $x_0.$ Let 
$\tau_1, \ldots, \tau_{n-1}, e_n$ be the frame field on $\mathbb{S}^n$  obtained by parallel translation 
of the local frame field $\tau_1, \ldots, \tau_{n-1}$ along the geodesic
perpendicular to $\partial\Omega$ and  $e_n$ denotes the parallel translation of 
the unit normal field on $\partial\Omega.$ Choose the first $n-1$ indices so that
\begin{equation*}
\kappa'_1\leq \ldots\leq \kappa'_{n-1}.
\end{equation*}
Using Lemma 6.1 of \cite{CNSIII}, we can find a vector $\gamma'=(\gamma_1,\ldots , \gamma_{n-1})
\in\mathbb{R}^{n-1}$ such that
\begin{align*}
\gamma_1\geq \cdots \geq\gamma_{n-1}\geq 0,\quad\sum_{\alpha<n}\gamma_\alpha=1
\end{align*}
and
\begin{equation}
\label{D-X-0}
\tilde d(x_0)=uw\sum_{\alpha<n}\gamma_{\alpha}\kappa'_\alpha(x_0) =
\sum_{\alpha<n}\gamma_{\alpha}\big(u\sigma_{\alpha\alpha}+\nabla_{\alpha\alpha}u\big) (x_0),
\end{equation}
where $\sigma_{\alpha\beta}=\langle \tau_\alpha, \tau_\beta\rangle$ and we have used that 
$g_{\alpha\beta}=\delta_{\alpha\beta}.$  Here we are also using that the distance
of $\kappa'$ and $uw\kappa'$ to $\partial\Gamma'_{r-1}$ is equal. Furthermore,
\begin{equation}
\label{CASA-GAMA-LINHA}
\Gamma'_{r-1}\subset \left\{\lambda'\in\mathbb{R}^{n-1}\, :\, \gamma'\cdot \lambda'>0  \right\}.
\end{equation}
It follows by Lemma 6.2 of \cite{CNSIII}, with $\gamma_n=0,$ that for all $x\in\partial
\Omega$ sufficiently near $x_0$  we have
\begin{align}
\label{DES-T}
\sum_{\alpha<n}\gamma_{\alpha}\big(u\sigma_{\alpha\alpha} +\nabla_{\alpha\alpha} u\big)(x)
\geq uw\sum_{\alpha<n}\gamma_\alpha\kappa'(x)\geq \tilde d(x)\geq\tilde d(x_0),
\end{align}
where we have used \eqref{CASA-GAMA-LINHA} and $|\gamma'|\leq 1$ in the second inequality. 
Then
\begin{align}
\label{DES-B-D}
\begin{split}
\nabla_n u (x)\sum_{\alpha<n}&\gamma_\alpha B_{\alpha\alpha}(x)
= \sum_{\alpha<n}\gamma_\alpha \nabla_{\alpha\alpha}\varphi(x)
-\sum_{\alpha<n}\gamma_\alpha \nabla_{\alpha\alpha}u(x)
\\ &\leq \sum_{\alpha<n}\gamma_\alpha\big(\varphi\sigma_{\alpha\alpha} 
+\nabla_{\alpha\alpha}\varphi\big)(x)-\tilde d(x_0),
\end{split}
\end{align}
where we have used \eqref{DES-T} in the last inequality.

Since the matrix $\{\underline u\sigma_{\alpha\beta} +\nabla^2_{\alpha\beta}\underline u\}$ is positive definite in
a neighbourhood of $\partial\Omega,$ it follows that
$\kappa'[\underline u]:=(\underline u\sigma_{11} +\nabla_{11}\underline u, \ldots, 
\underline u\sigma_{(n-1)(n-1)} +\nabla_{(n-1)(n-1)}\underline u)(x_0)$ belongs to $\Gamma'_{r-1}.$ 
We may assume
\begin{equation*}
\tilde d(x_0)<\frac{1}{2} \textrm{dist}\big(\kappa'[\underline u], \partial\Gamma'_{r-1}\big),
\end{equation*}
otherwise we have a uniform positive lower bound for $S_{r-1}(\kappa')(x_0)$ and
\eqref{NORMAL-NORMAL} follows directly from \eqref{Sr-boundary}. Thus,  we conclude from \eqref{D-X-0} and 
Lemma 6.2 of \cite{CNSIII} that
\begin{align*}
\nabla_n(u-\underline u) (x_0)\sum_{\alpha<n}&\gamma_\alpha B_{\alpha\alpha}(x_0)=
\sum_{\alpha<n}\gamma_\alpha\nabla_{\alpha\alpha}\underline u(x_0)
-\sum_{\alpha<n}\gamma_\alpha \nabla_{\alpha\alpha}u(x_0)
\\ & \geq\tilde d\big(\kappa'[\underline{u}]\big)-\tilde d(x_0) 
> \frac{1}{2} \tilde d\big(\kappa'[\underline{u}]\big)>0.
\end{align*}
As $\nabla_n(u-\underline u)\geq 0$ on
$\partial\Omega,$ then $\sum_{\alpha<n}\gamma_\alpha B_{\alpha\alpha}(x_0)>0$ and 
we conclude that there exist uniform positive
constants $ c,\delta>0,$ such that
\begin{equation*}
\sum_{\alpha<n}\gamma_\alpha B_{\alpha\alpha}(x)\geq c>0, 
\end{equation*}
for every $x\in \Omega$ satisfying $\textrm{dist}_{\mathbb{S}^n}(x, x_0)<\delta.$
Hence we may define the function
\begin{align}
\label{CARA-MU}
\mu(x) =\frac{1}{\sum_{\alpha<n}\gamma_\alpha B_{\alpha\alpha}(x)}
\left(\sum_{\alpha<n}\gamma_\alpha\big(\varphi\sigma_{\alpha\alpha}  
+\nabla_{\alpha\alpha}\varphi\big)(x)-\tilde d(x_0)\right),
\end{align}
for $x\in \Omega_{\delta}=\{ x\in\Omega\, :\, \textrm{dist}_{\mathbb{S}^n}(x,x_0)<\delta\}.$
It follows from \eqref{DES-B-D} that $ \nabla_n u \leq \mu$
on $\partial\Omega\cap\partial\Omega_{\delta}$ for a uniform constant $\delta>0.$
On the other hand, \eqref{D-X-0} implies that $\nabla_n u (x_0) = \mu(x_0).$ 
Then we may proceed as it was done for
the mixed normal-tangential derivatives to get the estimate $\nabla_{nn}u(x_0)
\leq C,$ for a uniform constant $C.$
In fact, redefining the function $\Phi$ given in \eqref{CARA-W} by replacing 
$\nabla_{k}(u-\varphi)$ for $\mu-\nabla_n u,$ i.e., defining
\begin{equation}
\label{CARA-W-2}
\Phi=\mu-\nabla_n u-\frac{K}{2}\sum_{l<n}\big(\nabla_l (u-\varphi)\big)^2,
\end{equation}
we conclude from the uniform bound $|\nabla^2\mu|\leq C$ that inequality \eqref{LW} 
remain valid for this new function $\Phi.$ Defining $\bar\Theta$
as in \eqref{cara-w-barra}, clearly inequality
\eqref{INEL-THETA} remains true. Finally, as $ \nabla_n u \leq \mu$
on $\partial\Omega\cap\partial\Omega_{\delta}$  the function $\bar{\Phi}$ defined in
\eqref{cara-w-barra} satisfies $\bar{\Phi}\geq 0$ on 
$\partial\Omega\cap\partial\Omega_{\delta}.$ Therefore, 
proceeding as above we get the uniform bound
\begin{equation}
\label{U-NUNU-X-0}
\nabla_{nn} u(x_0)\leq C.
\end{equation}

Hence, it follows from the previous estimates that the principal curvatures
 $\kappa_{\Sigma}[X(x_0)]=(\kappa_1, \ldots ,\kappa_n)(x_0)$ of $\Sigma$ at $X(x_0)$ is contained in an
 {\it a priori} bounded subset of $\Gamma_{r}\subset\Gamma_{r-1}.$
Therefore, as
\begin{equation}
H_r^{1/r}(\kappa[u])=\Upsilon\geq \inf_{\Delta_L}\Upsilon>0
\end{equation}
and $H_r=0$ on $\partial\Gamma_r,$ it follows that $\textrm{dist}\big((\kappa_1,\ldots, \kappa_{n-1})(x_0),
\partial\Gamma'_{r-1}\big)\geq \bar{c}_0>0$ for a uniform constant $\bar{c}_0>0.$ 
On the other hand, by  Lemma 1.2 of \cite{CNSIII}, the principal curvatures 
$\kappa_{\Sigma}=(\kappa_1, \ldots ,\kappa_n)$ of $\Sigma$ behave like
\begin{align}
\label{ki}\kappa_\alpha &=\kappa'_\alpha+o(1), \quad  1\leq \alpha\leq n-1,\\
\label{kn}\kappa_n &=\frac{h_{nn}}{g_{nn}}\left(1+O\left(\frac{1}{h_{nn}}\right)\right),
\end{align}
as $|h_{nn}|\rightarrow\infty,$ where $o(1)$ and $O(1/h_{nn})$ are uniform, depending only on 
$\kappa'_1, \ldots, \kappa'_{n-1}$ and the bounds on  $|u|,$ $|\nabla u|$ and 
$|\nabla_{\alpha n} u|,$  $(1\leq \alpha\leq n-1).$
Then there exists a uniform constant $N_0$ such that, if $\nabla_{nn} u(x_0)\geq N_0$ 
the distance of $\big(\kappa_1,\ldots,\kappa_{n-1}\big)(x_0)$ to $\kappa'(x_0)$ 
is less then $\bar{c}_0/2,$ where $\bar{c}_0$ is the constant given above. In particular, if $\nabla_{nn} u(x_0)\geq N_0$
then $\tilde{d}(x_0)\geq c_0$ for a uniform constant $c_0>0,$ which implies that $S_{r-1}(\kappa')$ admits 
itself a uniform positive bound on $\partial\Omega$ and, in this case, \eqref{NORMAL-NORMAL} 
follows from \eqref{Sr-boundary}.
This establish \eqref{NORMAL-NORMAL} and completes the proof of
Theorem \ref{teorema1-2}.

In \cite{CNSIV} it is also shown how to derive the global estimates for $|\nabla^2 u|$ on $\bar\Omega$
from its bound on the boundary $\partial\Omega,$ if $\Upsilon$ satisfy \eqref{derivada-ro}. 
Then we have the following result.
\begin{theorem}
\label{teorema3}
Let $u\geq \underline{u}$ be an admissible solution of \eqref{equation4} and suppose 
that $\Upsilon$ satisfy \eqref{derivada-ro}. Then we have the estimate
\begin{equation}
\label{S3-2}
\| u\|_{C^{2}(\bar{\Omega})}\leq C \quad \textrm{in } \bar\Omega,
\end{equation}
where $C$ depends on $ \inf_{\bar\Omega}\underline u, \|\underline{u}\|_{C^2(\bar\Omega)},
\|\psi^t\|_{C^2(\Delta_L)},$ the convexity of $\bar\Sigma$ in a neighbourhood of $\partial\Omega$
and other known data.
\end{theorem}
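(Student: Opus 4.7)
My proof plan is essentially an assembly plus one reduction. From Theorem \ref{teorema2} I already have the $C^0$ and $C^1$ bounds $L^{-1}\leq u\leq L$, $|\nabla u|\leq C$; from Theorem \ref{teorema1-2} I have a bound on $|\nabla^2 u|$ on $\partial\Omega$. Consequently it suffices to prove the \emph{global-to-boundary reduction}: the maximum of the largest principal curvature $\kappa_{\max}$ of $\Sigma$ over $\bar\Omega$ is controlled by its maximum over $\partial\Omega$ plus a constant. Equivalently, using \eqref{segunda-forma}, I want to bound $\max_{\bar\Omega}|\nabla^2 u|$ in terms of its boundary value.

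To carry out the reduction I would apply the maximum principle to the test function
\[
W(x,\xi)=\log\bigl(h_{ij}(x)\xi^i\xi^j\bigr)+\Psi(u(x),|\nabla u(x)|^2),
\]
where $\xi$ ranges over $g$-unit tangent vectors and $\Psi$ is an auxiliary function to be chosen (for instance $\Psi=Au-B|\nabla u|^2/2$ with $A,B$ large). Assume the maximum of $W$ is attained at an interior point $x_0$ in a direction we may orthogonally diagonalize to $e_1$, with the Weingarten matrix diagonal at $x_0$. Differentiating the equation \eqref{equation4} twice in the direction $e_1$, using the concavity of $F$ (which produces the usual good term $-F^{ij,kl}\nabla_1 a_{ij}\nabla_1 a_{kl}\geq 0$) and the first-order criticality of $W$ to replace $\nabla_1 a_{11}$ by $-a_{11}\nabla_1\Psi$, I obtain an inequality of the form $f_\alpha\kappa_\alpha^2\leq C(1+\sum f_\alpha)\kappa_1$ after controlling third-derivative cross terms by Ivochkina's inequality \eqref{new}. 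Choosing $A,B$ properly and again invoking \eqref{euler} and \eqref{new} to absorb the terms with positive $\kappa_\alpha$, this yields $\kappa_1(x_0)\leq C$, hence $\kappa_{\max}\leq C$ throughout $\Omega$.

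The role of the hypothesis \eqref{derivada-ro} is crucial in the double differentiation of the right-hand side $\Upsilon(X(x))=\Upsilon(\rho(x)x)$. When the chain rule is applied, a term proportional to $\partial_\rho(\rho\Upsilon)$ emerges with an a priori bad sign; the assumption $u\geq\underline u$ (equivalently $\rho\leq\bar\rho$) combined with \eqref{derivada-ro} supplies the correct sign, exactly as in \cite{CNSIV}. Without this monotonicity the maximum principle argument fails, which is why the author isolates condition \eqref{derivada-ro} and imposes it in the hypothesis.

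The main obstacle I anticipate is the bookkeeping of the extra terms coming from the radial, rather than vertical, nature of the graph: the matrices $[\gamma^{ij}]$ appearing in \eqref{G-I-J} and the explicit dependence of $G^s,G_u$ in \eqref{Gs}, \eqref{Gu} introduce lower-order terms that must be reabsorbed. The bounds \eqref{bound-gs} handle most of them, but one has to commute covariant derivatives on $\mathbb{S}^n$ (picking up curvature terms of the unit sphere, which are bounded) and verify that the combination $Au-B|\nabla u|^2/2$ still produces the dominant negative contribution needed to close the estimate, exploiting $\sum f_\alpha>0$ from the ellipticity in $\Gamma_r$. Once this computation is verified, the bound $\kappa_{\max}\leq C$ translates, via \eqref{segunda-forma} and the $C^1$ bounds, into $\|u\|_{C^2(\bar\Omega)}\leq C$, concluding the proof.
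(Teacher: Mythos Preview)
Your proposal is correct and follows the same route as the paper: the paper's proof of Theorem~\ref{teorema3} consists of a single sentence citing \cite{CNSIV} for the global-to-boundary reduction of the Hessian estimate under hypothesis \eqref{derivada-ro}, combined with the $C^1$ bound (Theorem~\ref{teorema2}) and the boundary Hessian bound (Theorem~\ref{teorema1-2}) already established. Your sketch simply unpacks what that citation entails --- a maximum-principle argument on a test function of the form $\log\kappa_{\max}+\Psi$, using the concavity of $F$ and the sign condition \eqref{derivada-ro} to control the differentiated right-hand side --- so the two are the same in substance.
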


%%%%%%%%%%%%%%%%%%%%%%%%%%%%%%%%%%%%%%%%%%%%%%%%%%%%%%%%%%%%%%%%%%%%%%%%%%%%%%%%%%%%%%%%%

\section{Proof of Theorem \ref{teorema1}}
\label{section4}

In this section we complete the proof of Theorem \ref{teorema1} by applying 
the method of continuity and a degree theory argument
with the aid of the {\it a priori} estimates we have established. Our proof
is inspired in \cite{CNSI} and \cite{SU}, where Monge-Ampère type equations are treated.
 
Here we will not deal with equation \eqref{equation4} because 
$G_u$ is positive and can not be bounded easily. Then we need to express \eqref{equation4}
in a different form. Setting  $v=-\ln \rho=\ln u,$
the matrix  $A[u]=[a_{ij}]$ can be written in terms of $v$ by
\begin{equation}
\label{g-v}
a_{ij}=\frac{e^v}{w}\big(\delta_{ij}+\gamma^{ik}\nabla_{kl}v\gamma^{jl}\big)
\end{equation}
where
\begin{equation}
\label{h-v}
w=\sqrt{1+|\nabla v|^2}, \quad \gamma^{ij}=\delta_{ij}-\frac{\nabla_i v\nabla_j v}{w(1+w)}.
\end{equation}
Hence, for $\Upsilon=\Psi^t,$  \eqref{equation4} takes the form
\begin{align}
\begin{split}
\label{equation5}
H(\nabla^2 v,\nabla v, v)&=\Psi^t(X(x))=e^{3(v-\underline v)} \big(t\psi(x)+(1-t)
\underline{\psi}(x)\big)\quad\textrm{in}\, \Omega,\\
v&=-\ln \phi \quad\textrm{on}\, \partial\Omega,
\end{split}
\end{align}
Notice that $\underline v=-\ln\bar\rho=\ln \underline{u}$
is a strictly subsolution of \eqref{equation5} fot $t>0$ and it is a solution for $t=0.$
Moreover, as 
\begin{align*}
\begin{split}
H_v=F^{ij} a_{ij}=\sum f_i\kappa_i \leq f(\kappa) =\Psi^t
\end{split}
\end{align*}
and 
\begin{align*}
\Psi^t_v= \frac{\partial \Psi^t }{\partial v}=3\Psi^t,
\end{align*}
it follows that $H_v-\Psi^t_v\leq 0.$ Then we can apply the comparison principle to equation \eqref{equation5} 
to conclude that any solution $v^t$ of \eqref{equation5} for $t>0$ satisfy $v^t> \underline{v}.$ 
Hence  Theorem \ref{teorema3} can be applied and we get the $C^2$ estimates for any solution $v^t$ of \eqref{equation5}.
Therefore the holder estimates follows from the Evans-Krylov Theorem and 
we can apply the continuity method to conclude that there exist a unique solution $v^0$ of
\eqref{equation5} for $t=1.$ Now we consider the family of equations ($s\in [0,1]$)
\begin{align}
\begin{split}
\label{equation6}
H(\nabla^2 v,\nabla v, v)&=\Xi^s(X(x))=s\psi(x)+(1-s)e^{3(v-\underline v)}
\underline{\psi}(x)\quad\textrm{in}\, \Omega,\\
v&=-\ln\phi \quad\textrm{on}\, \partial\Omega.
\end{split}
\end{align}
From Theorem \ref{teorema3}, the Evans-Krylov Theorem and by the standard regularity theory for
 second order uniformly elliptic equations we can get higher order estimate 
\begin{align}
\label{c4forvt}
\| v^s\|_{C^{4,\alpha} (\bar\Omega)}< \bar C\quad\textrm {independent of } s,
\end{align}
for any solution $v^s$ of equation \eqref{equation6} satisfying $v^s\geq \underline v.$ We also point out that, if $s>0$ and
$v^s\geq \underline{v}$ is a solution of \eqref{equation6} then $v^s$ is a supersolution of \eqref{equation5} for
$t=s.$ In particular, we have the strictly inequality $v^s>\underline v$ in this case.

Let $C_0^{4,\alpha}(\bar\Omega)$ be the subspace of $C^{4,\alpha}(\bar\Omega)$ consisting of functions vanishing on
the boundary. Consider the cone
\begin{align*}
\mathcal{O}  = \{ & z\in C_0^{4,\alpha}(\bar\Omega)  :  z>0 \textrm{ in } \Omega, \, 
 \nabla_n z >0   \textrm{ on }  \partial\Omega, \\ & z+\underline{v} \textrm{ is admissible} \textrm{ and }
 \|z\|_{C^{4,\alpha}(\bar\Omega)}\leq \bar C+\|\underline{v}\|_{C^{4,\alpha}(\bar\Omega)}\}, 
\end{align*}
where $\bar C$ is the constant given in \eqref{c4forvt}. Now we construct a map from $\mathcal{O}\times [0,1]$ to
$C^{2,\alpha}(\bar\Omega)$ given by
\begin{equation*}
M_s[w]=H(\nabla^2(z+\underline{v}), \nabla (z+\underline{v}), z+\underline{v})-
\Xi^s(z+\underline{v}), \quad z\in\mathcal{O},
\end{equation*}
where $\Xi^s$ is the function given in \eqref{phi-t}:
\begin{equation*}
\Xi^s(z+\underline{v})= t\psi(x)+(1-t)e^{3z}\psi(x).
\end{equation*} 
Clearly, $z$ is a solution of $M_s[z]=0$ iff $v^s=z+\underline{v}$ is a solution of \eqref{equation6}.
In particular,  $z^0=v^0-\underline{v}$ is the unique solution of $M_0[z]=0$ and $z^0\in\mathcal{O}.$
Moreover, there is no solution of $M_s[z]=0$ on $\mathcal{O}$ for any $s.$ Therefore, the degree of $M_s$ on 
$\mathcal{O}$ at $0$ $\deg(M_s,\mathcal{O},0)$ is well defined and independent of $s.$ For more details, we refer the
reader to \cite{LI1} and \cite{LI2}.

Now we compute  $\deg(M_0,\mathcal{O},0).$ We know that  $M_0[z]=0$ has a unique solution $z^0$ in $\mathcal{O}.$
The Fréchet derivative of $M_0$ at $z^0$ is a linear elliptic operator from $C^{4,\alpha}_0(\bar\Omega)$ to
$C^{2,\alpha}(\bar\Omega)$,
\[
M_{0,z^0}(h)=H^{ij}|_{v^0}\nabla_{ij}h+H^i|_{v^0}\nabla_i h+(H_{v}|_{v^0}-\Xi^0_v|{v^0})h.
\]
By \eqref{derivada-ro}, $H_{v}|_{v^0}-\Xi^0_v|{v^0}<0.$ So $M_{0,z^0}$ is invertible. By the theory in \cite{LI1}, we
can see
\[
\deg(M_0,\mathcal{O},0)=\deg(M_{0, z^0},B_1,0)=\pm 1\neq 0,
\]
where $B_1$is the unit ball of $C_0^{4,\alpha}(\bar\Omega).$ Therefore
\[
\deg(M_s,\mathcal{O},0)\neq 0 \quad \textrm{for all}\,  s\in [0,1].
\]
Then equation $M_s[z]=0$ has at least one solution for any $s\in[0,1].$ In particular,
the function $v^1=z^1-\underline{v}$ is then a solution of \eqref{equation5}. Therefore
$\rho = e^{-v^1}$ is a solution of \eqref{equation}.

%%%%%%%%%%%%%%%%%%%%%%%%%%%%%%%%%%%%%%%%%%%%%%%%%%%%%%%%%%%%

\section{Appendix}

For completeness, we present here the prove that inequality \eqref{delta} implies
inequality \eqref{delta2}, i.e., the existence of some $1\leq \gamma\leq n$ such that
\begin{equation}
\label{S5-1}
\sum_{l<n}\eta_{l\gamma}^2< K^{-2}.
\end{equation}
implies that 
\begin{equation}
\label{S5-2}
\sum_{l<n}^{n-1}\eta_{l\alpha}^2\geq\epsilon_0
\end{equation}
for all $\alpha\neq \gamma$ and for a uniform positive constant $\epsilon_0>0$ that does
not depend on $K.$ For simplicity, let us assume $\gamma=1.$ 
As $P=[\eta_{ij}]$ is an orthogonal matrix, $\eta_{ij}$ is the cofactor of index $(j,i)$ of $P.$
Developing the determinant of $P$ with respect to the first line we get
\begin{align}
\label{lema-c1}
\begin{split}
1&= |\det(\eta_{ij})| 
\leq  |\eta_{1n}\eta_{n1}|+\sum_{l<n}|\eta_{l1}\eta_{1l}|\\ &\leq  |\eta_{1n}\|\eta_{n1}|+
\Big(\sum_{l<n}(\eta_{l1})^2\Big)^{1/2}\Big(\sum_{l<n}(\eta_{1l})^2\Big)^{1/2} 
\\ &\leq  |\eta_{n1}|+K^{-1}.
\end{split}
\end{align}
Now we develop the cofactor $\eta_{n1}$ with respect to the $\alpha^{th}$ line, $2\leq\alpha\leq n,$ 
to obtain
\begin{align*}
|\eta_{n1}|& = \Big| \sum_{l<n} \eta_{l\alpha} \zeta_{\alpha l}\Big|
\leq (n-2)!\sum_{l<n} | \eta_s^\alpha| (n-2)! 
\\ & \leq (n-2)!(n-1)^{1/2}\Big(\sum_{s=1}^{n-1} (\eta_s^\alpha)^2\Big)^{1/2}.
\end{align*}
where $\zeta_{l\alpha}$ denotes the cofactor of index $(\alpha, l)$ of the $[n-1]$ matrix 
$(\eta_{ij}),$ where $2\leq i\leq n$ and $1\leq j\leq n-1.$ Thus, replacing this inequality into
\eqref{lema-c1} we obtain
\begin{align*}
1\leq (n-2)!(n-1)^{1/2}\Big(\sum_{l<n} (\eta_{l\alpha})^2\Big)^{1/2}+K^{-1}.
\end{align*}
Therefore, for $K\geq 2,$
\begin{align*}
\Big(\sum_{l<n} (\eta_{l\alpha})^2\Big)^{1/2}
\geq \frac{1}{2(n-1)^{1/2}(n-2)!},
\end{align*}
which proves \eqref{S5-2}. 

%%%%%%%%%%%%%%%%%%%%%%%%%%%%%%%%%%%%%%%%%%%%%%%%%%%%%%%%%%%%

\subsection*{Acknowledgements} This work has been partially supported by CAPES.
Part of this work was written while the author was  
visiting the Universidad de Murcia at Murcia, Espain.
He thanks that institution for its hospitality.

%%%%%%%%%%%%%%%%%%%%%%%%%%%%%%%%%%%%%%%%%%%%%%%%%%%%%%%%%%%%%%%%

\end{document}